\newtheorem{thm}{Theorem}
\newtheorem{lem}[thm]{Lemma}
\newtheorem{cor}[thm]{Corollary}
\newtheorem{prop}[thm]{Proposition}
\newtheorem{prob}{Problem}
\theoremstyle{definition}
\theoremstyle{remark}
\newtheorem{remark}[thm]{Remark}
\newcommand{\ml}{\mu^{-}}
\newcommand{\tml}{\mu^{-}_t}
\DeclareMathOperator{\gp}{gp}
\newcommand{\cp}{\,\square\,}
\tikzstyle{vertex}=[circle, draw, inner sep=0pt, minimum size=6pt]
\begin{document}

\title{Lower (total) mutual-visibility number in graphs}
\author{
Bo\v{s}tjan Bre\v{s}ar$^{a,b}$
\thanks{Email: \texttt{bostjan.bresar@um.si}}
\and
Ismael G. Yero$^{c} $\thanks{Email: \texttt{ismael.gonzalez@uca.es}}
}

\maketitle

\begin{center}
$^a$ Faculty of Natural Sciences and Mathematics, University of Maribor, Slovenia\\
$^b$ Institute of Mathematics, Physics and Mechanics, Ljubljana, Slovenia \\
$^c$ Departamento de Matem\'aticas, Universidad de C\'adiz, Algeciras Campus, Spain\\
\end{center}

\begin{abstract}
Given a graph $G$, a set $X$ of vertices in $G$ satisfying that between every two vertices in $X$ (respectively, in $G$) there is a shortest path whose internal vertices are not in $X$ is a mutual-visibility (respectively, total mutual-visibility) set in $G$.  The cardinality of a largest (total) mutual-visibility set in $G$ is known under the name (total) mutual-visibility number, and has been studied in several recent works.

In this paper, we propose two lower variants of these concepts, defined as the smallest possible cardinality among all maximal (total) mutual-visibility sets in $G$, and denote them by $\mu^{-}(G)$ and $\mu_t^{-}(G)$, respectively. While the total mutual-visibility number is never larger than the mutual-visibility number in a graph $G$, we prove that both differences $\mu^{-}(G)-\mu_t^{-}(G)$ and $\mu_t^{-}(G)-\mu^{-}(G)$ can be arbitrarily large. We characterize graphs $G$ with some small values of $\mu^{-}(G)$ and $\mu_t^{-}(G)$, and prove a useful tool called the Neighborhood Lemma, which enables us to find upper bounds on the lower mutual-visibility number in several classes of graphs. We compare the lower mutual-visibility number with the lower general position number, and find a close relationship with the Bollob\'{a}s-Wessel theorem when this number is considered in Cartesian products of complete graphs. Finally, we also prove the NP-completeness of the decision problem related to $\mu_t^{-}(G)$.
\end{abstract}

\noindent {\small \textbf{AMS subject classification:} 05C12, 05C85.}

\noindent {\small \textbf{Keywords:} mutual-visibility set; mutual-visibility number; total mutual-visibility set; computational complexity.}

\section{Introduction} \label{sec:intro}

While studying graph invariants, one aspires to find an extremal (minimum or maximum cardinality) set, which satisfies the defining properties of a given invariant. In most cases, the invariants are hard to be determined in general, while maximal or minimal sets with respect to set inclusion are relatively easy to obtain using a greedy approach. In this way, upper or lower versions of the studied invariants naturally appear, representing the worst outcome of a greedy procedure that satisfies the conditions imposed in the definition of the studied invariant.

For instance, consider a maximum independent set of a graph $G$, which corresponds to the graph invariant denoted by $\alpha(G)$. We could (somewhat naively) attempt to obtain such a set by adding vertices that are pairwise non-adjacent one at a time as long as this is possible. This process eventually produces a maximal independent set, and the cardinality of a smallest possible such set is denoted by $i(G)$ (this invariant is usually called the independent domination number, but sometimes it is also referred to as the lower independence number of $G$). There is some hope that $i(G)$ is a good approximation for $\alpha(G)$, which is precisely the motivation behind introducing the well-covered graphs~\cite{Pl}, which are the graphs $G$ with  $i(G)=\alpha(G)$. Another example comes from graph domination, where the domination number $\gamma(G)$ is the smallest cardinality among all dominating sets in a graph $G$, while the upper domination number $\Gamma(G)$ is the largest cardinality among all minimal dominating sets in $G$ (see~\cite{HaHeHe} for a recent monograph on graph domination).

The concept of mutual-visibility was recently introduced by Di Stefano in~\cite{DiS} with a primary motivation to achieve confidential communication between mobile entities in a network; see~\cite{DiS} for further references containing several other earlier studies. The paper was followed by more studies that all arose within the last year~\cite{CDK, CDDH,CDKY, TK}. Given a graph $G$ and $X\subset V(G)$, two vertices $a,b\in V(G)$ are $X$-{\em visible} if there exists a shortest $a,b$-path $P$ in $G$ such that $V(P)\cap X\subseteq\{a,b\}$. If each pair of vertices in $X$ are $X$-visible, then $X$ is a {\em mutual-visibility} set of $G$. The cardinality of a largest mutual-visibility set in $G$ is the {\em mutual-visibility number}, $\mu(G)$, of $G$. While studying mutual-visibility in strong products of graphs, the authors of~\cite{CDKY} encountered the following useful and natural variation. A set $X$ of vertices in $G$ is a {\em total mutual-visibility set} if every two vertices in $G$ (not only in $X$!) are $X$-visible. The cardinality of a largest total mutual-visibility set in $G$ is the {\em total mutual-visibility number}, $\mu_t(G)$, of $G$.

We initiate here the study of two variations of the two concepts above, which are in line with the initial discussion on upper/lower versions of graph invariants. In particular, since the decision problems of determining $\mu(G)$ and $\mu_t(G)$ are NP-complete~\cite{CDDH}, it is interesting to obtain lower bounds and potential approximations of the studied invariants. A set $X\subset V(G)$ is a {\em maximal (total) mutual-visibility set} in $G$ if $X$ is a (total) mutual-visibility set in $G$ and every set $Y$, with $X\subsetneq Y$, is not a (total) mutual-visibility set in $G$. The cardinality of a smallest maximal (total) mutual-visibility set is the {\em lower (total) mutual-visibility number} of $G$, denoted by $\ml(G)$, respectively, $\tml(G)$.

\subsection{Preliminaries and notation}

In this paper, we only consider simple and undirected graphs. In addition, since  vertices from different connected components cannot belong to the same mutual-visibility set, we will mainly restrict our attention to connected graphs.
Let $G=(V(G),E(G))$ be a connected graph.  A vertex $v$ of $G$ is a {\em cut-vertex} if $G-v$ (the graph obtained from $G$ by removing the vertex $v$ and the edges incident with it) is disconnected, and an edge $e\in E(G)$ is a {\em cut-edge} in $G$ if $G-e$ (the graph obtained from $G$ by removing the edge $e$) is disconnected. It is well known and easy to see that an edge $e$ is a cut-edge in $G$ if and only if $e$ does not lie in a cycle. The {\em neighborhood}, $N_G(v)$, of a vertex $v$ in $G$ is the set of vertices that are adjacent to $v$, and the {\em closed neighborhood} of $v$ is defined as $N_G[v]=N_G(v)\cup\{v\}$. The {\em degree}, $\deg_G(v)$, of $v$ in $G$ is $|N_G(v)|$, while $\Delta(G)$ and $\delta(G)$ denote the maximum, resp. minimum, degree of vertices in $G$. Let $n(G)=|V(G)|$.
By a {\em clique} in $G$ we mean a maximal complete subgraph in $G$; that is, a complete subgraph, which is not properly included in another complete subgraph of $G$. The cardinality of a largest clique in $G$ is denoted by $\omega(G)$.  Additionally, the complete graph, $K_n$, may also be called a clique.
A vertex whose neighborhood induces a complete graph is {\em simplicial}. A graph $G$ is {\em chordal} if any induced cycle in $G$ is a triangle. It is well known that every chordal graph contains a simplicial vertex. A {\em cograph} is a graph that does not contain a path $P_4$ as an induced subgraph. Cographs have been characterized by a procedure that starts with a single vertex and uses operations of complementation and disjoint union; see~\cite{bls-book}. It is well known that every non-trivial cograph contains two vertices that have the same neighborhoods. 

 For graph-theoretic notions not defined in the paper, the reader is referred to the book~\cite{we}.

The {\em distance}, $d_G(u,v)$ between two vertices $u$ and $v$ in a graph $G$ is the length of a shortest $u,v$-path (that is, the number of edges on such a path).
The {\em interval}, $I_G(u,v)$, between $u$ and $v$ in $G$ is the set of all vertices of $G$ that lie on a shortest path in $G$, that is, $I_G(u,v)=\{w\in V(G):\, d_G(u,v)=d_G(u,w)+d_G(w,v)\}$. A subgraph $H$ in a graph $G$ is {\em convex} if every shortest path between vertices of $V(H)$ lies in $H$. In other words, $H$ is convex in $G$ if $I_G(u,v)\subset V(H)$ for any $u,v\in V(H)$. If $H$ is a subgraph of $G$ such that  $d_H(u,v)=d_G(u,v)$ holds for any $u,v\in V(H)$, then $H$ is an {\em isometric subgraph} of $G$.

Given two graphs $G$ and $H$, the {\em Cartesian product} $G\cp H$ of $G$ and $H$ is the graph with $V(G\cp H)=V(G)\times V(H)$ and $(g,h)(g',h')\in E(G\cp H)$ whenever ($g=g'$ and $hh'\in E(H)$) or ($gg'\in E(G)$ and $h=h'$). Given the vertices $g\in V(G)$ and $h\in V(H)$, the set $\{(g,y):\, y\in V(H)\}$ is an {\em $H$-fiber}, and the set $\{(x,h):\, x\in V(G)\}$ is a {\em $G$-fiber} of the Cartesian product $G\cp H$. Clearly, $G\cp H$ has $|V(G)|$ $H$-fibers each of which is isomorphic to $H$. The Cartesian product is associative and commutative, so the Cartesian product $G_1\cp \cdots\cp G_k$ of $k$ graphs $G_1,\ldots,G_k$ is well defined. In particular, the {\em $k$-cube}, $Q_k$, or the {\em hypercube of dimension $k$}, is the Cartesian product of $k$ copies of the graph $K_2$.

\subsection{Goal and organization of the paper}

In this paper, we initiate the study of lower variants of mutual-visibility and total mutual-visibility. Note that the corresponding graph invariants ($\ml(G)$ and $\tml(G)$) give natural lower bounds on the (total) mutual-visibility number in graphs. In addition, they represent the worst outcome of the procedure in which we construct a (total) mutual-visibility set by using a greedy approach adding vertices to the set $S$ until $S$ is a maximal (total) mutual-visibility set. Since the latter procedure can be done in polynomial time, there is an additional reason for studying these invariants.

In Section~\ref{sec:related}, we give some additional arguments for studying the lower mutual-visibility number.
In Section~\ref{ss:neighborhood}, we prove a useful result, called the Neighborhood Lemma, and present some applications yielding upper bounds on the lower mutual-visibility number in several graph classes. Since the introduction of mutual-visibility was inspired by the general position problem in graphs, it is interesting to mention that a lower version of the general position number was recently introduced, and we compare it with the lower mutual-visibility number in Section~\ref{ss_generalposition}. Then, in Section~\ref{ss:bolobas}, we notice an interesting relationship of the lower mutual-visibility number with an old problem of Erd\H{o}s, Hajnal and Moon \cite{EHM}, which was independently solved by Wessel and Bollob\'{a}s. The solution enables us to determine the lower mutual-visibility number of the Cartesian product of two complete graphs. In Section~\ref{sec:complexity}, we establish the NP-completeness of the decision problem regarding the lower total mutual-visibility number (unfortunately, we were not able to determine the same for the lower mutual-visibility number). In the subsequent section, we present general upper and lower bounds on $\ml(G)$ and $\tml(G)$, where $G$ is a connected graph. In particular, we characterize the graphs $G$ with $\ml(G)=2$ as the graphs having a cut-edge. In Section~\ref{sec:two-parameters}, we then show that the differences $\ml(G)-\tml(G)$ and $\tml(G)-\ml(G)$ can be arbitrarily large by presenting two infinite families that attain all possible values for the stated differences. This is in contrast with the trivial fact that $\mu_t(G)\le \mu(G)$ in any graph $G$. We conclude the paper in Section~\ref{sec:conc} with a number of remarks and open problems.

\section{Related problems and Neighborhood Lemma}
\label{sec:related}

Our study has connections with several known topics in graph theory. One of them is a newly introduced topic that comes from the well known, and recently very active, area of general position problems, while another one is a classical combinatorial problem that goes back to Erd\H{o}s, Hajnal and Moon.

We may recall that the general position problem aims to find the cardinality of a largest set $X$ of vertices in a graph $G$ such that no shortest path between a pair of vertices of $X$ contains a third vertex of $X$. (Note that, connecting this definition to that of mutual-visibility sets, one can roughly say that $X$ is a set of vertices of $G$ such that each two vertices of $X$ are $X$-visible through all the possible shortest paths between them.) The \emph{general position number} is then the cardinality of a largest such set in a graph $G$, and is denoted by $\gp(G)$. The concept already appeared in \cite{Korner}, where it was considered in the context of graph theory (concerning the class of hypercubes) for the first time, and was more recently rediscovered in \cite{ullas-2016,manuel-2018}.

Clearly, a general position set is also a mutual-visibility set in any  \emph{connected} graph $G$, and consequently,
\begin{equation}\label{eq:gp-mu}
\mu(G)\ge \gp(G).
\end{equation}

\subsection{Neighborhood Lemma and \bf{some} applications}
\label{ss:neighborhood}

In this subsection, we present a useful tool for proving upper bounds on the lower mutual-visibility number of graphs.
It somehow indicates that appropriate maximal mutual-visibility sets can also be considered locally.

\begin{lem}
{\em [Neighborhood Lemma]}
\label{prp:neighborhood}
Let $G$ be a connected graph and $x\in V(G)$. The set $N[x]$ is a maximal mutual-visibility set if and only if for every two vertices $u,v\in N(x)$ we have $uv\in E(G)$ or there exists $w\in G-N[x]$, which is a common neighbor of $u$ and $v$ in $G$.
\end{lem}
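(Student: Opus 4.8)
The plan is to reduce the claimed equivalence to two separate facts: first, that the displayed neighborhood condition is \emph{exactly} equivalent to $N[x]$ being a mutual-visibility set, and second, that whenever $N[x]$ is a mutual-visibility set it is automatically maximal. Granting these, both directions of the lemma follow at once. For the forward direction, a maximal mutual-visibility set is in particular a mutual-visibility set, so the condition holds by the first fact. For the backward direction, the condition makes $N[x]$ a mutual-visibility set by the first fact, and then the second fact upgrades this to maximality. So the two facts, taken together, give the stated ``if and only if''.

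For the first fact I would classify the pairs of vertices inside $N[x]$. Any pair $x,u$ with $u\in N(x)$ is trivially $N[x]$-visible, since the edge $xu$ is a shortest path of length $1$ with no internal vertex; so such pairs impose no constraint. It therefore suffices to analyze pairs $u,v\in N(x)$. If $uv\in E(G)$ they are visible for the same reason. Otherwise $u-x-v$ is a path of length $2$, whence $d_G(u,v)=2$ and every shortest $u,v$-path has the form $u-w-v$ for a common neighbor $w$ of $u$ and $v$; such a path avoids $N[x]$ in its interior exactly when $w\in G-N[x]$. Consequently $N[x]$ is a mutual-visibility set if and only if every non-adjacent pair $u,v\in N(x)$ admits a common neighbor lying in $G-N[x]$, which is precisely the displayed condition.

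For the second fact I would argue maximality directly. Let $Y$ be any set with $N[x]\subsetneq Y$, and pick $y\in Y\setminus N[x]$. Since $y\notin N[x]$ we have $d_G(x,y)\ge 2$, so every shortest $x,y$-path has length at least $2$, and its first vertex after $x$ is a neighbor of $x$, hence lies in $N(x)\subseteq N[x]\subseteq Y$ and is an internal vertex of the path. Thus no shortest $x,y$-path avoids $Y$ in its interior, so $x$ and $y$ are not $Y$-visible and $Y$ is not a mutual-visibility set; as $Y$ was arbitrary, $N[x]$ is maximal. The computations in the first fact are routine distance bookkeeping; the only genuine content, and the step I expect to state most carefully, is this maximality argument, whose key observation is that every path leaving $x$ must pass through $N(x)$, which forces $x$ to be invisible from any vertex added outside $N[x]$.
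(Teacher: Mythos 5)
Your proof is correct and follows essentially the same route as the paper: the same distance-$2$ analysis for non-adjacent pairs in $N(x)$, and the same observation that any vertex outside $N[x]$ fails to be visible from $x$ because every shortest path leaving $x$ passes through $N(x)$. Your reorganization into ``the condition characterizes being a mutual-visibility set'' plus ``a mutual-visibility set of the form $N[x]$ is automatically maximal'' is only a cosmetic repackaging of the paper's argument.
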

\proof
Let $N[x]$ be a maximal mutual-visibility set in $G$, and let $u,v\in N(x)$ such that $uv\notin E(G)$. Since $u$ and $v$ are $N[x]$-visible, and $d_G(u,v)=2$, there should be a vertex $w$ outside $N[x]$ so that $uwv$ is a (shortest) path. Thus, $w\in N(u)\cap N(v)$, as claimed.

Conversely, let $x$ be a vertex in $G$ such that  for every two vertices $u,v\in N(x)$ we have $uv\in E(G)$ or there exists $w\in G-N[x]$ so that $w\in N(u)\cap N(v)$. The latter condition ensures that every two vertices in $N(x)$ are $N[x]$-visible. Clearly, $x$ is $N[x]$-visible to all its neighbors.  Thus, $N[x]$ is a mutual-visibility set. It is also clear that $N[x]$ is a maximal mutual-visibility set, because for $S=N[x]\cup\{z\}$, where $z$ is any vertex in $V(G)\setminus N[x]$, the vertices $x$ and $z$ are not $S$-visible.
\qed

\bigskip

We follow with some applications of the Neighborhood Lemma.
In the event that there exists a vertex $x\in V(G)$ that admits the conditions in Lemma~\ref{prp:neighborhood}, we get the following upper bound: $$\ml(G)\le |N(x)|+1\le \Delta(G)+1.$$ In particular, the conditions are fulfilled if $x$ is a simplicial vertex, in which case we also get the bound $\ml(G)\le \omega(G)$, where $\omega(G)$ is the cardinality of a largest clique.
We infer the following result:

\begin{cor}
If $G$ is a chordal graph, then $\ml(G)\le \omega(G)$.
\end{cor}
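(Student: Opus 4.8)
The plan is to combine the Neighborhood Lemma with the standard structural fact about chordal graphs recalled in the Preliminaries, namely that every chordal graph contains a simplicial vertex. Since $\ml(G)$ is defined as the minimum cardinality over all maximal mutual-visibility sets, it suffices to exhibit a single maximal mutual-visibility set of size at most $\omega(G)$; the simplicial vertex will produce exactly such a set.

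First I would invoke the existence of a simplicial vertex $x\in V(G)$, guaranteed because $G$ is chordal. By definition of simplicial, $N(x)$ induces a complete subgraph, so for every pair $u,v\in N(x)$ we have $uv\in E(G)$. This means the hypothesis of the Neighborhood Lemma (Lemma~\ref{prp:neighborhood}) is satisfied in its easiest form: the alternative requiring a common neighbor outside $N[x]$ is never needed, since the adjacency condition already holds for all pairs. Applying the lemma, $N[x]$ is a maximal mutual-visibility set in $G$.

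The second step is to bound its cardinality. Because $x$ is simplicial, the set $N[x]=N(x)\cup\{x\}$ induces a complete graph in $G$, hence it is a clique, and therefore $|N[x]|\le \omega(G)$. Combining this with the previous step and the definition of the lower mutual-visibility number yields
$$\ml(G)\le |N[x]|\le \omega(G),$$
which is the desired inequality.

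I expect no genuine obstacle here: all the substantive work is already carried out in the Neighborhood Lemma, and this corollary is essentially the observation recorded in the remark preceding it, specialized to chordal graphs via the existence of a simplicial vertex. The only point worth stating explicitly is that the bound $\ml(G)\le\omega(G)$ follows from bounding a \emph{single} well-chosen maximal set, rather than analyzing all of them, which is exactly what makes the lower mutual-visibility number tractable to bound from above in this setting.
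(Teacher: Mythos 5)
Your proof is correct and follows exactly the paper's route: the corollary is obtained as the immediate specialization of the remark after the Neighborhood Lemma, using the existence of a simplicial vertex $x$ in a chordal graph so that all pairs in $N(x)$ are adjacent, whence $N[x]$ is a maximal mutual-visibility set of size at most $\omega(G)$. Nothing is missing.
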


Next, consider a non-trivial cograph $G$, and let $x,y\in V(G)$ be two vertices in $G$ that have the same neighborhoods. Now, if the conditions of Lemma~\ref{prp:neighborhood} hold for $x$, then $N_G[x]$ is a maximal mutual-visibility set, and $\ml(G)\le \Delta(G)+1$. On the other hand, if the conditions of Lemma~\ref{prp:neighborhood} are not fulfilled for $x$, then $X=N_G[x]\setminus \{y\}$ is a mutual-visibility set. Indeed, any two vertices in $N_G(x)\setminus\{y\}$ that are not adjacent are $X$-visible by the shortest path that goes through $y$. Clearly, $X$ is also a maximal mutual-visibility set. These observations yield the following result.
\begin{cor}
If $G$ is a non-trivial cograph, then $\ml(G)\le \Delta(G)+1$.
\end{cor}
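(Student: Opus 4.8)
The plan is to exploit the twin structure of cographs together with the Neighborhood Lemma. By the fact recalled in the preliminaries, a non-trivial cograph $G$ contains two distinct vertices $x,y$ with the same neighborhoods; the property I will actually use is the unified identity $N_G(x)\setminus\{y\}=N_G(y)\setminus\{x\}$, which holds whether $x$ and $y$ are adjacent (true twins) or not (false twins). I would then split into two cases according to whether $x$ satisfies the hypothesis of Lemma~\ref{prp:neighborhood}. If that condition holds for $x$, then $N_G[x]$ is already a maximal mutual-visibility set, so $\ml(G)\le |N_G[x]|=\deg_G(x)+1\le \Delta(G)+1$ and we are done. The interesting case is when the condition fails, that is, when some pair $u,v\in N_G(x)$ has $uv\notin E(G)$ and no common neighbor outside $N_G[x]$. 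Here the natural candidate is $X=N_G[x]\setminus\{y\}$, and the goal is to show that $X$ is a maximal mutual-visibility set of size $\deg_G(x)\le\Delta(G)$.

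Showing that $X$ is a mutual-visibility set is the easy half. The vertex $x$ is adjacent to every other vertex of $X$, and any two non-adjacent $u',v'\in N_G(x)\setminus\{y\}$ lie in $N_G(y)$ by the twin identity, so $u'-y-v'$ is a shortest path whose only internal vertex $y$ lies outside $X$; hence $u',v'$ are $X$-visible. The crux, and the step I expect to be the main obstacle, is maximality. Since mutual-visibility sets are clearly closed under taking subsets, it suffices to check that adding any single vertex to $X$ destroys the property. Adding back $y$ recreates $N_G[x]$, which is not a mutual-visibility set precisely because of the offending pair $u,v$; one first notes that $y$ cannot be one of $u,v$, since $y$ is adjacent to every vertex of $N_G(x)\setminus\{y\}$, so $u,v\in X$ remain mutually blocked.

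The delicate part is adding an external vertex $z\in V(G)\setminus N_G[x]$, where I must certify that $x$ and $z$ are not $(X\cup\{z\})$-visible. Every shortest $x,z$-path leaves $x$ through some neighbor $x'\in N_G(x)$, with $x'\neq z$ because $z\notin N_G(x)$. If $x'\neq y$, then $x'\in X$ already blocks that path; the only remaining possibility is a path starting $x-y-\cdots$, but then the vertex following $y$ lies in $N_G(y)\setminus\{x\}=N_G(x)\setminus\{y\}\subseteq X$ and is internal (the path cannot be $x-y-z$, as that would force $z\in N_G(x)$). Thus every shortest $x,z$-path meets $X$ in an internal vertex, so no such $z$ can be added and $X$ is maximal. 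Combining the two cases gives $\ml(G)\le\Delta(G)+1$, as claimed.

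I would single out the external-vertex maximality check as the true obstacle: the easy cases only use adjacencies around $x$, whereas ruling out $z$ requires understanding shortest paths that might attempt to escape through the deleted twin $y$, and it is exactly the twin identity $N_G(x)\setminus\{y\}=N_G(y)\setminus\{x\}$ that forces any such path to re-enter $X$ immediately after $y$.
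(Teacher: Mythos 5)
Your proposal is correct and follows essentially the same route as the paper: take twin vertices $x,y$ of the cograph, apply Lemma~\ref{prp:neighborhood} to $x$, and when its condition fails fall back on $X=N_G[x]\setminus\{y\}$, with non-adjacent neighbors of $x$ made visible through the deleted twin $y$. The only difference is that you carefully verify the maximality of $X$ (including the external-vertex check via the identity $N_G(x)\setminus\{y\}=N_G(y)\setminus\{x\}$), a step the paper dismisses with ``clearly''; your verification is sound.
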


Another application of the Neighborhood Lemma is in the class of Cartesian grids. We mention that the graph $P_n\cp P_m$ was earlier studied in some papers concerning mutual-visibility parameters. In particular, it was proved by Di Stefano in~\cite{DiS} that $\mu(P_n\cp P_m)=2\min\{m,n\}$. Now, consider the lower mutual-visibility number of grids. Denoting $V(P_m)=[m]$, we note that the neighbors $(1,2)$ and $(2,1)$ of the vertex $(1,1)$ in the graph $P_m\cp P_n$ are at distance $2$, and there is a path of length two between them, which avoids $(1,1)$. Thus, by Lemma~\ref{prp:neighborhood}, we infer that $S=\{(1,1),(2,1),(1,2)\}$ is a maximal mutual-visibility set in $P_m\cp P_n$. Hence,
$\ml(P_m\cp P_n)\le 3$.
On the other hand, it is easy to see (and it follows directly from Theorem~\ref{thm_cut-edge} which we prove in Section~\ref{sec:bounds}) that $\ml(P_m\cp P_n)>2$.
(See Fig.~\ref{fig:grid}, where a maximal mutual-visibility set of the grid $P_{15}\cp P_{7}$ is shown.)

\begin{cor}
If $m,n\ge 2$, then $\ml(P_m\cp P_n)=3$.
\end{cor}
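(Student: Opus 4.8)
The plan is to sandwich $\ml(P_m\cp P_n)$ between two bounds that both equal $3$, using the two tools developed just above: the Neighborhood Lemma for the upper bound, and the forthcoming characterization of graphs with $\ml=2$ (Theorem~\ref{thm_cut-edge}) for the lower bound.

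For the upper bound, I would fix the corner vertex $x=(1,1)$, whose only neighbors in $P_m\cp P_n$ are $(2,1)$ and $(1,2)$; here we use $m,n\ge 2$ so that both neighbors exist. These two neighbors are non-adjacent but are joined by the length-two path $(2,1),(2,2),(1,2)$, so $(2,2)$ is a common neighbor of them lying in $G-N[x]$. The hypothesis of Lemma~\ref{prp:neighborhood} is therefore met, and $N[x]=\{(1,1),(2,1),(1,2)\}$ is a maximal mutual-visibility set of cardinality $3$. Hence $\ml(P_m\cp P_n)\le 3$.

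For the lower bound, I would first note that any two-element vertex set is trivially a mutual-visibility set, so $\ml(G)\ge 2$ for every connected graph on at least two vertices, in particular for the grid. To rule out the value $2$, I would invoke Theorem~\ref{thm_cut-edge}, which asserts that $\ml(G)=2$ holds exactly when $G$ has a cut-edge. When $m,n\ge 2$, every edge of $P_m\cp P_n$ lies on a four-cycle (a unit square of the grid: a horizontal edge closes up using a neighboring column, which exists since $n\ge 2$, and symmetrically for a vertical edge since $m\ge 2$). By the stated characterization of cut-edges as edges lying on no cycle, the grid thus has no cut-edge, so $\ml(P_m\cp P_n)\neq 2$ and therefore $\ml(P_m\cp P_n)\ge 3$. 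Combining the two estimates gives $\ml(P_m\cp P_n)=3$.

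Since each estimate is an essentially immediate application of one of the two cited results, I do not expect any substantial obstacle. The only point requiring a little care is the verification that the grid is $2$-edge-connected, i.e.\ that every edge sits on a cycle, which is exactly what licenses the use of Theorem~\ref{thm_cut-edge} to exclude the value $2$; this is where the hypothesis $m,n\ge 2$ is genuinely needed on both sides.
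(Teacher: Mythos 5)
Your argument is correct and follows the paper's proof exactly: the upper bound comes from applying the Neighborhood Lemma to the corner $(1,1)$ with $(2,2)$ as the external common neighbor of $(2,1)$ and $(1,2)$, and the lower bound comes from Theorem~\ref{thm_cut-edge} since the grid has no cut-edge. The only difference is that you spell out the (easy) verification that every edge of $P_m\cp P_n$ lies on a $4$-cycle, which the paper leaves implicit.
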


\begin{figure}[!ht]
\centering
\begin{tikzpicture}[scale=.75]\footnotesize
\def\vr{3.5pt} 

\begin{scope}<+->;

  \draw[step=1cm,gray,very thin] (0,0) grid (14,6);
\path (0,0) coordinate (a);
\path (0,1) coordinate (b);
\path (1,0) coordinate (c);

\draw (a) [fill=black] circle (\vr);
\draw (b) [fill=black] circle (\vr);
\draw (c) [fill=black] circle (\vr);

\draw (0,-0.6) node {1};
\draw (1,-0.6) node {2};
\draw  (2,-0.6) node {3};
\draw  (3,-0.6) node {4};
\draw  (4,-0.6) node {\ldots};
\draw (-0.6,0) node {1};
\draw (-0.6,1) node {2};
\draw (-0.6,2) node {3};
\draw (-0.6,3) node {4};
\draw (-0.6,4) node {\vdots};

\end{scope}

\end{tikzpicture}
\caption{A maximal mutual-visibility set in $P_{15}\Box P_{7}$ depicted with black circles.}
\label{fig:grid}
\end{figure}
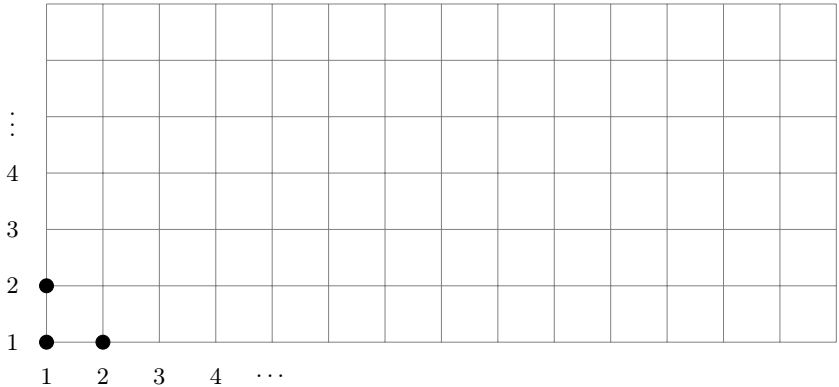

We will use the Neighborhood Lemma in several further results in the paper.

\subsection{Lower version of the general position problem}
\label{ss_generalposition}

In connection with the general position problem, and the smallest possible sets with a maximality property, the lower general position problem has been recently considered in \cite{DiSKKTY} as follows. Given a graph $G$, a set $X\subset V(G)$ is a {\em maximal general position set} in $G$ if $X$ is a general position set (that is, for every two vertices $u,v\in X$, $I_G(u,v)\cap X=\{u,v\}$), and every set $Y$, where $X\subsetneq Y$, is not a general position set. The cardinality of a smallest maximal general position set is the {\em lower general position number} of $G$, denoted by $\gp^-(G)$.

One might think that, in view of the relationship~\eqref{eq:gp-mu} between general position and mutual-visibility numbers, it could be expected that a similar inequality would hold between their lower versions. However, this is far from reality, since the lower parameters are not in general comparable, as we now show. To this end, we consider the following construction.

We begin with an arbitrarily large set of isolated vertices $B$, and three extra vertices $a,a',b$. Then we add the edges $ab, a'b$ and $ax, a'x$ for every $x\in B$. We next add three (arbitrarily large) cliques $K_t$, $K_{t'}$ and $K_{t''}$ and all the edges $ax, bx$ with $x\in V(K_t)$, $bx$ with $x\in V(K_{t'})$, and $a'x, bx$ with $x\in V(K_{t''})$. We denote the graph thus obtained as $G^*$. See Figure \ref{fig:G-prime} for a sketch of such a graph $G^*$.

\begin{figure}[ht]
\centering
\begin{tikzpicture}[scale=.65, transform shape]

\node [draw, shape=circle, scale=.8] (b1) at  (0, 0) {};
\node [draw, shape=circle, scale=.8] (b2) at  (0, 1) {};
\node [draw, shape=circle, scale=.8] (b3) at  (0, 2) {};
\node [draw, shape=circle, scale=.8] (b) at  (0, 3) {};
\node [draw, shape=circle, scale=.8] (a) at  (-5, 2) {};
\node [draw, shape=circle, scale=.8] (a1) at  (5, 2) {};

\node [scale=1.3] at (-5.4,1.7) {$a$};
\node [scale=1.3] at (5.4,1.7) {$a'$};
\node [scale=1.3] at (0,3.55) {$b$};

\node [scale=1.3] at (4.5,4.5) {$K_{t''}$};
\node [scale=1.3] at (-4.5,4.5) {$K_t$};
\node [scale=1.3] at (0,5.5) {$K_{t'}$};
\node [scale=1.3] at (-0.8,-0.1) {$B$};

\draw[rounded corners=2pt] (-0.4,-0.4) rectangle ++(0.8,2.8);

\draw[rotate around={-20:(4.5,4.5)}] (4.5,4.5) ellipse (1.5cm and 0.7cm);
\draw[rotate around={20:(-4.5,4.5)}] (-4.5,4.5) ellipse (1.5cm and 0.7cm);
\draw[] (0,5.5) ellipse (1.5cm and 0.7cm);

\draw(a)--(b1)--(a1)--(b2)--(a)--(b3)--(a1)--(b)--(a);

\draw(a)--(-5.91,3.95);
\draw(a)--(-3.2,4.55);
\draw(a1)--(5.91,3.95);
\draw(a1)--(3.2,4.55);
\draw(b)--(-1.45,5.3);
\draw(b)--(1.45,5.3);
\draw(b)--(-5.5,3.69);
\draw(b)--(5.5,3.69);
\draw(b)--(-3.3,5.23);
\draw(b)--(3.3,5.23);

\end{tikzpicture}
\caption{\small A sketch of a graph $G^*$.}\label{fig:G-prime}
\end{figure}
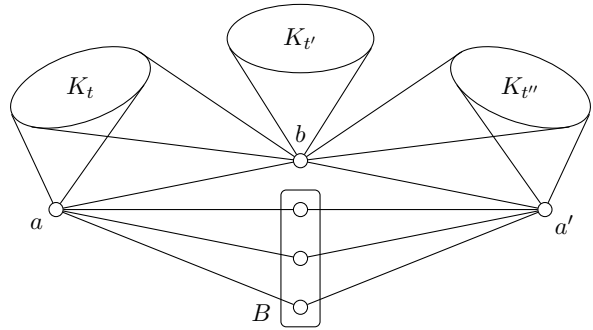

\begin{prop}
\label{prop:comparison-1}
For any graph $G^*$, $\mu^-(G^*)\le 3$ and $\gp^-(G^*)\gg 3$.
\end{prop}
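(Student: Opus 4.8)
The plan is to establish the two claims $\mu^-(G^*)\le 3$ and $\gp^-(G^*)\gg 3$ separately, since they are of entirely different character. For the upper bound on $\mu^-(G^*)$, the natural strategy is to apply the Neighborhood Lemma (Lemma~\ref{prp:neighborhood}) to the vertex $b$. First I would compute the closed neighborhood $N[b]$: it consists of $b$ together with $a$, $a'$, all vertices of $B$, and all vertices of the cliques $K_{t'}$ and $K_{t''}$ (since $b$ is joined to $K_{t'}$ and $K_{t''}$ but not to $K_t$). This set is large, so $b$ is \emph{not} the vertex I want to use directly. Instead, the cleaner choice is to find a vertex whose closed neighborhood has size at most $3$ and check the Neighborhood Lemma there, or to exhibit an explicit small maximal mutual-visibility set of size $3$ and verify maximality by hand.

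Concretely, I would try the set $S=\{a,a',b\}$, or a variant such as $\{a,a',x\}$ for some $x\in B$. Observe that $a$ and $a'$ are non-adjacent and at distance $2$ (their common neighbors include $b$ and every vertex of $B$), so there is a shortest $a,a'$-path through an internal vertex outside $S$; similarly one checks that each remaining pair in $S$ is $S$-visible. The crucial and more delicate point is \textbf{maximality}: I must argue that adding any vertex $z\in V(G^*)\setminus S$ destroys the mutual-visibility property. The intended mechanism is that $S$ already ``blocks'' visibility for any candidate, for instance because $b$ lies on every shortest path between $a$ and $a'$-type vertices through $B$, or because a new vertex would become an internal vertex on the unique short path realizing the visibility of a pair in $S$. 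I would verify this case by case according to whether $z$ lies in $B$, in $K_t$, in $K_{t'}$, or in $K_{t''}$. The main obstacle I anticipate is precisely this maximality verification: because the cliques and $B$ can be arbitrarily large, I need a uniform argument that no vertex from any of these parts can be added, rather than an exhaustive check, and getting the set $S$ exactly right so that every potential extension is blocked is the delicate part.

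For the lower bound $\gp^-(G^*)\gg 3$, the goal is the reverse: every \emph{maximal} general position set must be large. Here I would argue that any maximal general position set is forced to contain many vertices, by showing that no small set can be maximal — equivalently, any general position set that omits ``most'' of some large clique can still be extended. The key structural feature is that each of the cliques $K_t,K_{t'},K_{t''}$ is itself a general position set (within a clique every pair of vertices is at distance $1$, so no third vertex lies on a shortest path between them), and a large clique tends to force large maximal general position sets. I would show that a maximal general position set must, for at least one of the arbitrarily large cliques, contain a number of its vertices that grows with $t,t',t''$; since these parameters are unbounded, $\gp^-(G^*)$ is unbounded, giving $\gp^-(G^*)\gg 3$. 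The obstacle on this side is to rule out the possibility of a small maximal set ``spread thinly'' across the graph: I must confirm that vertices in the cliques cannot individually destroy the general-position condition for one another in a way that caps the set at a small size, which amounts to checking that the intervals $I_{G^*}(u,v)$ for clique-vertices $u,v$ do not sweep up the rest of the clique.
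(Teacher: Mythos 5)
For the first claim your plan coincides with the paper's: the paper also takes $X=\{a,a',b\}$, checks $X$-visibility (the shortest $a,a'$-path through any vertex of $B$ does the work), and then verifies maximality by exactly the case split you describe ($z\in B$ fails against $b$ since the only internal vertices of shortest $b,z$-paths are $a$ and $a'$; $z$ in one of the cliques fails against $a$ or $a'$ since the only internal vertex available is $b$... except that $b\notin X$ is not the issue --- the point is that for $z\in V(K_{t'})$, say, every shortest $z,a$-path passes through $b\in X$). So that half is fine and is essentially the paper's argument; your worry that the maximality check is ``delicate'' is unfounded here, since each candidate $z$ has essentially one two-step route to the relevant vertex of $X$ and it is blocked.

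The second half has a genuine gap. Your proposed mechanism is that every maximal general position set must contain a growing number of vertices from one of the cliques $K_t,K_{t'},K_{t''}$. Two problems. First, the step you do sketch (``large cliques force large maximal gp sets'') is not by itself a lower bound on $\gp^-$: a graph can contain a huge general position set and still have $\gp^-=2$ (the paper's own Proposition~\ref{prop:comparison-2} on $K_{r,s}$ is exactly such an example). What you actually need, and what the paper proves, is the maximality dichotomy: if a maximal gp set $S$ meets one of the cliques, then it must contain \emph{all} of that clique (otherwise an omitted clique vertex could be added), and hence $|S|\ge\min\{t,t',t''\}$. Second, and more importantly, you never address the case where $S$ is disjoint from all three cliques --- and this is where the paper does most of its work. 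In that case one must analyze whether $a$, $a'$, $b$ and the vertices of $B$ lie in $S$: if $a,a'\notin S$ then $S$ is forced to be $B\cup\{b\}$, which is large because $|B|$ is arbitrarily large (so the largeness does \emph{not} come from a clique); if $a\in S$ and $a'\in S$ then $S$ misses $B\cup\{b\}$ and is not maximal because $\{a,a'\}\cup V(K_t)\cup V(K_{t'})\cup V(K_{t''})$ is a gp set; and if $a\in S$, $a'\notin S$ one exhibits explicit larger gp sets such as $\{a,b\}\cup V(K_t)$ or $\{a,x\}\cup V(K_{t''})$ with $x\in B$ to contradict maximality. Without this case analysis, a small maximal gp set ``spread thinly'' over $\{a,a',b\}\cup B$ is not ruled out by anything you wrote, so the bound $\gp^-(G^*)\gg 3$ does not follow from your outline.
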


\begin{proof}
We first claim that $X=\{a,a',b\}$ is a maximal mutual-visibility set. Clearly, these three vertices are $X$-visible, and so, $X$ is a mutual-visibility set. Now, observe that no vertex $w\in B$ can be added to $X$ keeping the mutual-visibility property because then $b, w$ would not be visible. Similarly, no vertex from $V(K_t)$, $V(K_{t'})$ or $V(K_{t''})$ can be added to $X$, since they would be not visible with $a$ or with $a'$. Thus, $X$ is maximal, and so $\mu^-(G^*)\le 3$.

Now, to see that $\gp^-(G^*)\gg 3$ consider the following arguments. Let $S$ be a maximal general position set of the smallest cardinality. If $S\cap V(K_{t})\ne \emptyset$, or $S\cap V(K_{t'})\ne \emptyset$, or $S\cap V(K_{t''})\ne \emptyset$, then $V(K_{t})\subseteq S$, or $V(K_{t'})\subseteq S$, or $V(K_{t''})\subseteq S$, respectively, and so, $\gp^-(G^*)\gg 3$, since $K_t$, $K_{t'}$ and $K_{t''}$ are arbitrarily large cliques. Hence, we may assume that $S\cap (V(K_t)\cup V(K_{t'})\cup V(K_{t''}))=\emptyset$. If neither $a$ nor $a'$ are in $S$, then it must be $S=B\cup\{b\}$, and so, $\gp^-(G^*)\gg 3$, since $B$ has an arbitrarily large cardinality (and at least larger than $2$). By symmetry, we may assume that $a\in S$, and consider two cases.

\medskip
\noindent
Case 1: $a'\in S$ (and $a\in S$). In this case, $(B\cup \{b\})\cap S=\emptyset$. But then $S$ is not maximal, since $\{a,a'\}\cup (V(K_t)\cup V(K_{t'})\cup V(K_{t''}))$ is a general position set, a contradiction.

\medskip
\noindent
Case 2: $a'\notin S$ (and $a\in S$). In this situation, $S$ could contain at most one vertex from the set $B\cup \{b\}$. If $b\in S$, then $S$ is not maximal, since $\{a,b\}\cup (V(K_t))$ is a general position set, a contradiction. On the other hand, if $b\notin S$, then again $S$ is not maximal since $\{a,x\}\cup (V(K_{t''}))$ (where $x\in B$) is a general position set.
\end{proof}

From the above proof we can deduce that $\gp^-(G^*)\ge\min\{t,t',t'',|B|\}$. By the definition of the  graphs $G^*$ each of the values on the right side of the equality can be as large as one wants.

The proposition above shows that there are graphs $G$ such that $\mu^-(G)<\gp^-(G)$. Moreover, the examples of complete bipartite graphs show that this inequality can also be reversed.

\begin{prop}
\label{prop:comparison-2}
For any $r\ge s\ge 1$, $\mu^-(K_{r,s})=s+1$ and $\gp^-(K_{r,s})=2$.
\end{prop}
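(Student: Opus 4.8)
The plan is to compute both parameters separately for the complete bipartite graph $K_{r,s}$ with parts $A$ (of size $r$) and $B$ (of size $s$), where $r \ge s \ge 1$. Throughout I will use the basic distance structure of $K_{r,s}$: two vertices in the same part are at distance $2$ (connected through any vertex of the other part), while two vertices in different parts are at distance $1$.

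First I would determine $\gp^-(K_{r,s})$, which I expect to be the easier computation. I claim that any two vertices form a maximal general position set, giving $\gp^-(K_{r,s}) = 2$. Indeed, take $\{u,v\}$ with $u,v$ in the same part, say $A$: their interval $I(u,v)$ consists of $u$, $v$, and all of the opposite part $B$, so adjoining any third vertex would place it on a shortest $u,v$-path, destroying the general position property; one must check the symmetric case $u,v$ in different parts as well, where $I(u,v) = \{u,v\}$ but then any third vertex lies on a shortest path with one of $u,v$. Since a general position set must contain at least two vertices to be nontrivial here and no set of size $2$ extends, every such pair is maximal, yielding $\gp^-(K_{r,s}) = 2$.

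Next I would handle $\ml(K_{r,s}) = s+1$, which is the more substantial part. For the upper bound I would exhibit a maximal mutual-visibility set of size $s+1$: I propose taking all of $B$ together with a single vertex $a \in A$, i.e. $X = B \cup \{a\}$. One verifies $X$ is a mutual-visibility set (the vertices of $B$ see each other through vertices of $A \setminus \{a\}$, and $a$ sees each $b \in B$ directly), and then checks maximality by showing no vertex of $A \setminus \{a\}$ can be added without breaking visibility between $a$ and that vertex's partners, or more directly that adding any $a' \in A$ makes $a, a'$ only visible through a vertex of $B \subseteq X$. For the matching lower bound I would argue that every maximal mutual-visibility set has size at least $s+1$: the key observation is that a mutual-visibility set cannot contain three or more pairwise nonadjacent vertices from the \emph{same} part unless it simultaneously reserves private connectors, and a careful counting shows that omitting too many vertices of $B$ forces non-maximality because a further vertex could always be adjoined.

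The main obstacle I expect is the lower bound $\ml(K_{r,s}) \ge s+1$, since it requires ruling out the existence of a \emph{maximal} mutual-visibility set of size $s$ or smaller. The delicate point is that maximality is a global condition: I must show that any mutual-visibility set $X$ with $|X| \le s$ can always be enlarged. The natural strategy is a case analysis on how $X$ meets the two parts $A$ and $B$, using that two same-part vertices in $X$ require a common neighbor outside $X$ to witness their visibility, so that a small $X$ necessarily leaves some vertex of the graph that can be safely added. I anticipate that the crux is bounding, via this witness-neighbor requirement, how many vertices $X$ can contain from each part, and converting that bound into a guarantee that a set of size at most $s$ admits an extension, thereby forcing every maximal set to have at least $s+1$ vertices.
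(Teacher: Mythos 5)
Your upper bound for $\mu^-$ is correct and is exactly the paper's: $B\cup\{a\}=N[a]$ is a maximal mutual-visibility set of size $s+1$ (the paper derives this from its Neighborhood Lemma; your direct verification is fine). However, your $\gp^-$ argument contains a false claim: it is \emph{not} true that any two vertices form a maximal general position set. For $u,v$ in the same part $A$ with $|A|\ge 3$, the interval between any two vertices of $A$ is those two vertices together with all of $B$, so a third vertex $w\in A$ lies on no shortest $u,v$-path and conversely; hence $\{u,v,w\}$ --- indeed all of $A$ --- is a general position set and $\{u,v\}$ is not maximal. Your assertion that ``adjoining any third vertex would place it on a shortest $u,v$-path'' fails precisely for third vertices in $A$. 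The cross pair ($u\in A$, $v\in B$) \emph{is} maximal and singletons are not, so the conclusion $\gp^-(K_{r,s})=2$ survives on that half of your argument alone (the paper simply cites \cite{DiSKKTY} for this equality).

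The more serious problem is that the lower bound $\mu^-(K_{r,s})\ge s+1$, which is the real content of the proposition, is only announced: ``a careful counting shows\ldots'' is exactly the step that needs a proof, and your guiding heuristic (that a mutual-visibility set cannot contain many pairwise nonadjacent vertices from one part) points in the wrong direction --- large intersections with a part are harmless (witness $B\cup\{a\}$ itself), and what must be excluded is a \emph{small} unextendable set. The paper's argument is: a maximal mutual-visibility set $X'$ must meet both parts $U$ (size $r$) and $U'$ (size $s$); if it contains all of one part it contains exactly one vertex of the other, giving $|X'|\ge s+1$; otherwise $|X'\cap U|<r$ and $|X'\cap U'|<s$, and if $|X'\cap U|\le r-2$ one can adjoin a vertex of $U\setminus X'$ and still have a mutual-visibility set (pairs in $U'\cap X'$ retain a common neighbour in $U$ outside the enlarged set, and the new vertex sees each vertex of $U\cap X'$ through the vertex of $U'$ missing from $X'$), contradicting maximality. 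Hence $|X'\cap U|=r-1$ and symmetrically $|X'\cap U'|=s-1$, so $|X'|\ge r+s-2\ge s+1$ when $r\ge 3$, with the small cases $r\in\{1,2\}$ ($P_2$, $P_3$, $C_4$) checked directly. Without this counting your proposal does not establish the equality $\mu^-(K_{r,s})=s+1$.
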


\begin{proof}
If $s=1$, then $K_{r,s}$ is a star and one can easily verify that $\mu^-(K_{r,s})=2=s+1$ and $\gp^-(K_{r,s})=2$. We may next assume that $r \ge s \ge 2$. Let $U$ and $U'$ be the bipartition sets of $K_{r,s}$ of cardinality $r$ and $s$, respectively. The equality $\gp^-(K_{r,s})=2$ was proved in \cite{DiSKKTY}. On the other hand, consider the set $X=U'\cup \{u\}$ where $u\in U$. Note that $X=N[u]$ and the conditions of Lemma~\ref{prp:neighborhood} are fulfilled. Therefore, $X$ is a maximal mutual-visibility set,
which leads to $\mu^-(K_{r,s})\le s+1$.

On the other hand, let $X'$ be a maximal mutual-visibility set of $K_{r,s}$. First observe that $X'$ is neither a subset of $U$ nor of $U'$. Indeed, if, for instance, $X'\subseteq U$, then we can extend $X'$ to a larger mutual-visibility set by adding any vertex of $U'$ (notice that $U'$ has cardinality at least two), which is a contradiction since $X'$ is maximal. Thus, $X'\cap U\ne \emptyset$ and $X'\cap U'\ne \emptyset$. Also, if $U\subset X'$, then $|X'\cap U'|=1$, for otherwise, there are two vertices of $X'\cap U'$, which are not $X'$-visible. Thus, $|X'|=|U|+1\ge s+1$.  An analogous conclusion follows if $U'\subset X'$.

In this sense, we may assume $|X'\cap U|<|U|=r$ and $|X'\cap U'|<|U'|=s$. If $|X'\cap U|<r-1$, then we can extend $X'$ to a larger mutual-visibility set by adding to $X'$ one vertex of $U$ not yet in $X'$, which is not possible. Thus, $|X'\cap U|=r-1$. By similar arguments, we also  deduce that $|X'\cap U'|=s-1$. Altogether, we obtain that $|X'|\ge r+s-2\ge s+1$ (when $r\ge 3$), which gives the desired equality. If $r\in \{1,2\}$, then $K_{r,s}$ is either $P_2$, $P_3$ or $C_4$, where clearly $\gp^-(K_{r,s})=s+1$.
\end{proof}

\subsection{Relation to Bollob\' as-Wessel theorem}
\label{ss:bolobas}

The problem of mutual-visibility in Cartesian products of two complete graphs is intrinsically related to the famous Zarankiewicz problem, which is still open. More directly, it was noticed by Cicerone, Di Stefano and Klav\v{z}ar  in~\cite{CDK} that $\mu(K_m\cp K_n)$ equals $z(m,n;2,2)$, which is the maximum number of 1s in an $m\times n$ binary matrix that contains no constant $2\times 2$ submatrix of 1s;
see~\cite{CDK} for more details. Here we present a similarly strong connection between the lower mutual-visibility number of Cartesian products of two complete graphs with another old result related to binary matrices. In fact, the result can also be presented in terms of complete bipartite subgraphs of bipartite graphs, which was first conjectured by Erd\H{o}s, Hajnal and Moon~\cite{EHM}.

Let $G$ be a bipartite graph with bipartition sets of cardinalities $m$ and $n$. The graph $G$ has the {\em property $(k,\ell)$} if adding any new edge to $G$ increases the number of complete bipartite subgraphs $K_{k,\ell}$ of $G$. It was conjectured in~\cite{EHM} that a bipartite graph with bipartition sets of cardinalities $m$ and $n$ that satisfies the property $(k,\ell)$ has at least $(k-1)m+(\ell-1)n-(k-1)(\ell-1)$ edges. The conjecture was proved independently by Wessel~\cite{wes} and Bollob\'{a}s~\cite{bol}. The special case of the result when $k=\ell=2$ is related to the topic of this paper. Note that  the Bollob\'{a}s-Wessel theorem in this case states that, when $G$ is a bipartite graph with bipartition sets of cardinalities $m$ and $n$, if adding any new edge increases the number of $4$-cycles (that is, subgraphs isomorphic to $C_4$), then $G$ has at least $m+n-1$ edges.

Consider the lower mutual-visibility problem in the Cartesian product $K_m\cp K_n$ of two complete graphs, whose vertex sets are denoted by $[m]$ and $[n]$. Let $T=(\{1\}\times [n])\cup ([m]\times \{1\})$ be the subset of $[m]\times [n]$. Clearly, $T$ is the closed neighborhood of the vertex $(1,1)$, and the conditions of Lemma~\ref{prp:neighborhood} are fulfilled. Therefore,
$T$ is a maximal mutual-visibility set and $\ml(K_m\cp K_n)\le |T|=m+n-1$.

On the other hand, note that the set of vertices of the Cartesian product $K_m\cp K_n$ uniquely corresponds to the edge set of the bipartite graph $B$ with bipartition sets of cardinalities $m$ and $n$ defined as follows. Denoting the bipartition sets of $B$ by $[m]$ and $[n]$, we have $ij\in E(B)$ if and only if $(i,j)\in V(K_m\cp K_n)$ for any $i\in [m]$ and $j\in [n]$.  Now we have that a given set $S\subset V(K_m\cp K_n)$ is a mutual-visibility set of $K_m\cp K_n$ if and only if $S$ does not contain a subgraph isomorphic to $C_4$ (this fact was proved in \cite{CDK}). Equivalently, this means that the subgraph of $B$ induced by the edges in $B$ that correspond to the vertices of $S$ in $K_m\cp K_n$ do not contain $K_{2,2}$ as a subgraph.

Suppose that $S$ is a mutual-visibility set of $K_m\cp K_n$, where $|S|<m+n-1$. We will show that $S$ is not maximal. Consider the edges $S'$ in $B$ corresponding to the vertices of $S$. Clearly $|S'|=|S|<m+n-1$. By the Bollob\'{a}s-Wessel theorem, there exists an edge not in $S'$ (say, $i'j'$), which one can add to $S'$ and the number of subgraphs isomorphic to $K_{2,2}$ in $S'$ does not increase (that is, it remains zero). Translating this to the corresponding mutual-visibility set $S$ in $K_m\cp K_n$ we observe that $S$ is not a maximal mutual-visibility set, because $S\cup\{(i',j')\}$ is a mutual-visibility set in $K_m\cp K_n$. We infer the following result.

\begin{cor}
\label{cor:K-n-K-m}
If $m,n$ are positive integers, then $\ml(K_m\cp K_n)=m+n-1$.
\end{cor}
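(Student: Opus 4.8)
The plan is to establish the equality $\ml(K_m\cp K_n)=m+n-1$ by proving the two inequalities separately, with the Bollob\'as--Wessel theorem (in the case $k=\ell=2$) doing the heavy lifting for the lower bound. First I would dispose of the upper bound, which is immediate from the Neighborhood Lemma: taking $T=N[(1,1)]=(\{1\}\times[n])\cup([m]\times\{1\})$, any two neighbors of $(1,1)$ are at distance $2$ and admit a common neighbor avoiding $(1,1)$ (for instance, $(1,j)$ and $(i,1)$ have the common neighbor $(i,j)$), so the conditions of Lemma~\ref{prp:neighborhood} are satisfied. Hence $T$ is a maximal mutual-visibility set and $\ml(K_m\cp K_n)\le|T|=m+n-1$.

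The key step is the lower bound $\ml(K_m\cp K_n)\ge m+n-1$, for which I would use the correspondence between vertices of $K_m\cp K_n$ and edges of a complete bipartite host graph $K_{m,n}$. Concretely, set up the bijection sending a vertex $(i,j)$ to the edge $ij$ of the bipartite graph $B$ with parts $[m]$ and $[n]$. Under this bijection, I would invoke the fact (established in~\cite{CDK}) that $S\subset V(K_m\cp K_n)$ is a mutual-visibility set if and only if $S$ contains no four vertices inducing a $C_4$, which translates to: the edge set $S'\subseteq E(K_{m,n})$ corresponding to $S$ contains no $K_{2,2}$ (equivalently, no $4$-cycle). The plan is then to argue by contraposition: suppose $S$ is a mutual-visibility set with $|S|<m+n-1$; I will show $S$ is not maximal. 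Since $|S'|=|S|<m+n-1$, the bipartite graph on edge set $S'$ has fewer than $m+n-1$ edges, so by the contrapositive of the Bollob\'as--Wessel theorem it cannot have the property $(2,2)$ (because having property $(2,2)$ forces at least $m+n-1$ edges). Thus there is some edge $i'j'\notin S'$ whose addition does not create a new $K_{2,2}$, i.e.\ $S'\cup\{i'j'\}$ is still $K_{2,2}$-free. Translating back, $S\cup\{(i',j')\}$ is a strictly larger mutual-visibility set, contradicting maximality of $S$.

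The main obstacle is making the logical direction of the Bollob\'as--Wessel theorem watertight. The theorem guarantees a minimum edge count for graphs \emph{satisfying} property $(2,2)$; what I actually need is its contrapositive, namely that a $K_{2,2}$-free graph with too few edges must admit an edge-addition keeping it $K_{2,2}$-free. One must verify that property $(2,2)$ is precisely the negation of ``admits such an edge-addition'' within the class of $K_{2,2}$-free graphs: having property $(2,2)$ means every added edge increases the number of $K_{2,2}$ subgraphs, which for a $K_{2,2}$-free graph is exactly the statement that no edge can be added while remaining $K_{2,2}$-free, i.e.\ the graph is a maximal $K_{2,2}$-free graph. So the chain is: $|S'|<m+n-1$ implies $S'$ does not have property $(2,2)$ (by the theorem), which implies $S'$ is not a maximal $K_{2,2}$-free edge set, which is exactly what gives the augmenting edge $i'j'$. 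Once this equivalence is pinned down, the two inequalities combine to yield $\ml(K_m\cp K_n)=m+n-1$.
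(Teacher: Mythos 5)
Your proposal is correct and follows essentially the same route as the paper: the upper bound via the Neighborhood Lemma applied to $N[(1,1)]$, and the lower bound via the edge/vertex correspondence with a bipartite graph on parts $[m]$ and $[n]$, the $C_4$-free characterization of mutual-visibility sets from~\cite{CDK}, and the contrapositive of the Bollob\'as--Wessel theorem for $k=\ell=2$. Your explicit verification that property $(2,2)$ for a $K_{2,2}$-free graph is exactly maximality as a $K_{2,2}$-free edge set is a welcome clarification of a step the paper passes over more quickly, but it is the same argument.
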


\section{Computational complexity}
\label{sec:complexity}

This section is focused on computational aspects of the lower total mutual-visibility number of graphs. That is, we consider the following decision problem:

\begin{center}
\fbox{
	\parbox{\textwidth}{
		\textsc{Lower Total Mutual-Visibility Problem} \\
		\textit{Input}: A connected graph $G = (V,E)$ and $k\le n(G)$.\\
		\textit{Question}: Is $\mu^-_t(G)\leq k$?}}

\end{center}

The fact that the \textsc{Lower Total Mutual-Visibility Problem} belongs to the class NP is, unlike with many other similar problems, not obvious. For this purpose we will need to use the following remark, which follows from the fact that if a set of vertices $X$ is not a total mutual-visibility set, then no superset of $X$ is a total mutual-visibility set.

\begin{remark}
\label{rem:cond-maximal}
Let $G$ be a graph and let $X$ be a total mutual-visibility set of $G$. Then, $X$ is maximal if and only if $X\cup \{w\}$ is not a total mutual-visibility set for every $w\in V(G)\setminus X$.
\end{remark}

To prove the NP-completeness of the problem, we present a polynomial reduction from the \textsc{Independent dominating set problem} to the \textsc{Lower Total Mutual-Visibility Problem}. The former problem was already shown to be NP-complete in the book of Garey and Johnson~\cite{GJ}. For the reduction, we follow a construction already given in~\cite{CDDH}. We may also recall that an \emph{independent dominating set} $S$ of a graph $G$ is a set of vertices that is independent and all vertices in $V(G)-S$ have a neighbor in $S$.

The construction is made as follows. Let $G$ be a graph with $V(G) = [n]$. For every edge $e=ij$ of $G$, a vertex $v_e=v_{ij}$ is added as well as the edges $iv_e$ and $jv_e$. Also, all possible edges between all the vertices $v_e$, where $e\in E(G)$, are added so that these vertices induce a clique $K_m$. Let $t\ge 3$ be an integer. A clique $K_{t+1}$ is added and one of its vertices, denoted by $x$, is chosen, so that each vertex of $G$ is joined by an edge to $x$. Assume that $V(K_{t+1})=\{x,x_1,\dots,x_t\}$. Note that $\{x_1,\dots,x_t\}$ induce a clique $K_t$. In addition, for each vertex $v_e$ with $e\in E(G)$, a clique $K_t$ with vertex set $V(K_t)=\{e_{y_1},\dots,e_{y_t}\}$ is added, and each vertex of such $K_t$ is joined by an edge with the corresponding $v_e$. The resulting graph is denoted by $G'$. A drawing of a fairly representative example of the graph $G'$, when $G = P_5$, was given in~\cite[Figure 1]{CDDH}. However, in order to facilitate the reading, we next include a similar drawing when $G$ is the star $S_4$ on four leaves.

\begin{figure}[!ht]
\centering
\begin{tikzpicture}[scale=1.5, style=thick]
\def\vr{3pt}
\def\len{1}

\coordinate(G) at (-1,0);
\coordinate(a) at (0,0);
\coordinate(b) at (1,0);
\coordinate(c) at (2,0);
\coordinate(d) at (3,0);
\coordinate(e) at (4,0);
\coordinate(x) at (2,1);
\coordinate(x1) at (0.5,-1);
\coordinate(x2) at (1.5,-1);
\coordinate(x3) at (2.5,-1);
\coordinate(x4) at (3.5,-1);

\coordinate(k1) at (2.4,2);
\coordinate(k2) at (0.7,-2.2);
\coordinate(k3) at (1.7,-2.2);
\coordinate(k4) at (2.7,-2.2);
\coordinate(k5) at (3.7,-2.2);
\coordinate(k6) at (4.2,-1);

\coordinate(l1) at (1.6,1.9);
\coordinate(d1) at (2.4,1.9);
\coordinate(l2) at (0.1,-1.9);
\coordinate(d2) at (0.9,-1.9);
\coordinate(l3) at (1.1,-1.9);
\coordinate(d3) at (1.9,-1.9);
\coordinate(l4) at (2.1,-1.9);
\coordinate(d4) at (2.9,-1.9);
\coordinate(l5) at (3.1,-1.9);
\coordinate(d5) at (3.9,-1.9);

\draw (b)--(c)--(d);
\draw (a) .. controls (0.2,0.3) and (1.8,0.3) .. (c) .. controls (2.2,0.3) and (3.8,0.3) .. (e);

\draw (a)--(x);
\draw (b)--(x);
\draw (c)--(x);
\draw (d)--(x);
\draw (e)--(x);

\draw (a)--(x1);
\draw (c)--(x1);
\draw (b)--(x2);
\draw (c)--(x2);
\draw (c)--(x3);
\draw (d)--(x3);
\draw (c)--(x4);
\draw (e)--(x4);

\draw (x)--(l1);
\draw (x)--(d1);
\draw (x1)--(l2);
\draw (x1)--(d2);
\draw (x2)--(l3);
\draw (x2)--(d3);
\draw (x3)--(l4);
\draw (x3)--(d4);
\draw (x4)--(l5);
\draw (x4)--(d5);

\draw(a)[fill=white] circle(\vr);
\draw(b)[fill=white] circle(\vr);
\draw(c)[fill=white] circle(\vr);
\draw(d)[fill=white] circle(\vr);
\draw(e)[fill=white] circle(\vr);
\draw(x)[fill=white] circle(\vr);
\draw(x1)[fill=white] circle(\vr);
\draw(x2)[fill=white] circle(\vr);
\draw(x3)[fill=white] circle(\vr);
\draw(x4)[fill=white] circle(\vr);

\draw (0.5,-2) ellipse (0.4cm and 0.25cm);
\draw (1.5,-2) ellipse (0.4cm and 0.25cm);
\draw (2.5,-2) ellipse (0.4cm and 0.25cm);
\draw (3.5,-2) ellipse (0.4cm and 0.25cm);
\draw (2,2) ellipse (0.4cm and 0.25cm);
\draw (2.1,-1) ellipse (2.1cm and 0.3cm);

\draw[anchor = west] (x)++(0.05,0.05) node {$x$};
\draw[anchor = west] (x1)++(0.05,0.0) node {$v_{e_1}$};
\draw[anchor = west] (x2)++(0.05,0.0) node {$v_{e_2}$};
\draw[anchor = west] (x3)++(0.05,0.0) node {$v_{e_3}$};
\draw[anchor = west] (x4)++(0.05,0.0) node {$v_{e_4}$};
\draw[anchor = west] (G) node {$S_4:$};

\draw[anchor = west] (k1) node {$K_t$};
\draw[anchor = west] (k2)++(0.05,0.0) node {$K_t$};
\draw[anchor = west] (k3)++(0.05,0.0) node {$K_t$};
\draw[anchor = west] (k4)++(0.05,0.0) node {$K_t$};
\draw[anchor = west] (k5)++(0.05,0.0) node {$K_t$};
\draw[anchor = west] (k6) node {$K_m$};

\end{tikzpicture}
\caption{The graph $G'$ from the star $G=S_4$ on four leaves.}
\label{fig:G'}
\end{figure}
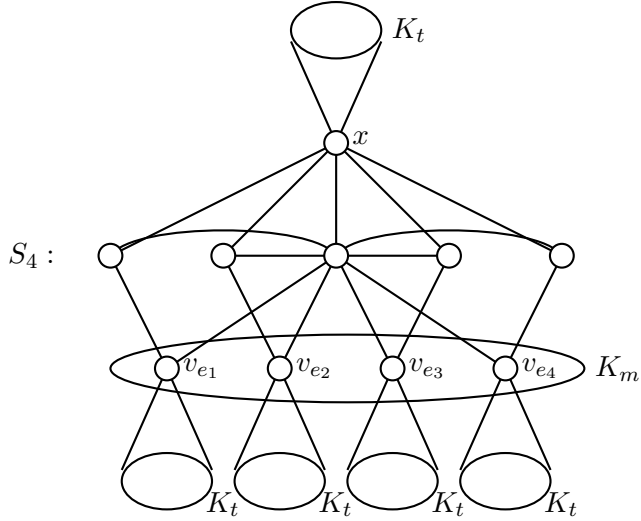

\begin{thm}
\textsc{Lower Total Mutual-Visibility Problem} is NP-complete.
\end{thm}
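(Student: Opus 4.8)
The plan is to establish membership in NP and then NP-hardness via the reduction set up above from the \textsc{Independent Dominating Set} problem, which is NP-complete by \cite{GJ}; throughout we may assume the input graph $G$ has no isolated vertices, since these are forced into every independent dominating set and are easily accounted for on both sides.

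Membership in NP is the part that needs care, because a certificate must witness not only that a set is small and a total mutual-visibility set, but also that it is \emph{maximal}, and maximality a priori refers to all supersets. This is exactly where Remark~\ref{rem:cond-maximal} enters: since every superset of a set that is not a total mutual-visibility set is again not one, a total mutual-visibility set is maximal if and only if no single-vertex extension $X\cup\{w\}$ is a total mutual-visibility set. A verifier therefore takes a set $X$ with $|X|\le k$ and (i) checks that $X$ is a total mutual-visibility set by testing, for each pair $u,v$, whether $d_{G'}(u,v)$ equals the distance between $u$ and $v$ in the subgraph induced by $(V(G')\setminus X)\cup\{u,v\}$, and (ii) runs the same test for each of the $O(n(G'))$ sets $X\cup\{w\}$ with $w\notin X$. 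Both phases are polynomial, so the problem lies in NP.

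For hardness I would first determine the shape of every maximal total mutual-visibility set of $G'$. Two facts are key. Since $x$ and every $v_e$ are cut-vertices separating an attached clique $K_t$ from the rest of $G'$, they lie internally on the only shortest paths leading from that $K_t$ outward, and hence belong to no total mutual-visibility set. On the other hand, every vertex of the attached cliques $K_t$ is \emph{simplicial}, and a simplicial vertex is never an internal vertex of any shortest path, so it can always be appended to a total mutual-visibility set without destroying the property; thus all $t(m+1)$ of these vertices (where $m=|E(G)|$) lie in \emph{every} maximal total mutual-visibility set. Consequently a maximal total mutual-visibility set $X$ is completely determined by $S:=X\cap V(G)$, and $|X|=t(m+1)+|S|$. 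The heart of the argument is then to show that $X$ is a maximal total mutual-visibility set of $G'$ if and only if $S$ is an independent dominating set of $G$. Independence comes from the pairs $(e_{y_1},x_1)$ with $e=ij\in E(G)$: every shortest such path exits the pendant clique through $v_e$ and then reaches $x$ through $i$ or through $j$, so the pair is $X$-visible if and only if $\{i,j\}\not\subseteq S$; ranging over all edges, this says precisely that $S$ is independent. A finite case check over the remaining pair types (two original vertices see each other through $x$; two distinct pendant cliques see each other through the clique $K_m$ on the vertices $v_e$; and so on) shows that independence of $S$ is also sufficient for $X$ to be a total mutual-visibility set. Domination then encodes maximality: adding $i\in V(G)\setminus S$ preserves the property if and only if $S\cup\{i\}$ is still independent, so $X$ is maximal if and only if every vertex outside $S$ has a neighbour in $S$; as a maximal independent set is the same as an independent dominating set, the equivalence follows. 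Therefore $\tml(G')=t(m+1)+i(G)$, and putting $k=t(m+1)+k'$ turns the question ``$i(G)\le k'$'' into the question ``$\tml(G')\le k$'', completing the reduction.

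I expect the main obstacle to be the sufficiency direction of this equivalence: proving that an independent $S$ blocks no pair of vertices requires a careful, though finite, enumeration of all pair types in $G'$ and, for each, exhibiting a shortest path whose interior avoids $S$, using the apex $x$ and the clique on the vertices $v_e$ as the universal ``detours'' and taking extra care that original vertices of small degree in $G$ do not produce an exceptional blocked pair.
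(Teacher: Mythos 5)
Your proposal is correct and follows essentially the same route as the paper: membership in NP via the single-vertex-extension characterization of maximality (the paper's Remark~\ref{rem:cond-maximal}), and NP-hardness via the same construction $G'$ and reduction from \textsc{Independent Dominating Set}, with the identical structural analysis (cut-vertices $x$ and $v_e$ excluded, simplicial clique vertices forced in, and $X\cap V(G)$ corresponding to a maximal independent set), yielding $\tml(G')=t(m+1)+i(G)$. The only difference is that the paper outsources the pair-by-pair verification that the constructed set is a total mutual-visibility set to the earlier argument in~\cite[Theorem 3.1]{CDDH}, whereas you correctly flag that this finite case check still needs to be carried out.
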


\begin{proof}
In the proof, we use similar arguments as the ones used in~\cite[Theorem 3.1]{CDDH} to prove some related complexity results. However, instead of using a reduction from the \textsc{Independent Set Problem}, in this proof, we use a reduction from the \textsc{Independent dominating set problem}.

We first observe that the \textsc{Lower Total Mutual-Visibility Problem} is in NP, since one can check in polynomial time whether a given set is indeed a total mutual-visibility set, and also, by using Remark \ref{rem:cond-maximal}, that it is maximal.

Let $G$ be an arbitrary connected graph and consider the construction $G'$ from $G$ as described above. Let $m=|E(G)|$, and let $X\subset V(G')$ contain all the vertices of all the $m+1$ involved cliques $K_t$ whose vertices are simplicial vertices together with the vertices of an independent dominating set $I$ of $G$. We claim that $X$ is a maximal total mutual-visibility set of $G'$.

Indeed, by using the arguments of the proof of~\cite[Theorem 3.1]{CDDH}, we derive that $X$ is a total mutual-visibility set of $G'$. Now, observe that none of the vertices from  the set $\{x\}\cup V(K_m)$ (which are cut-vertices of $G'$) can be added to $X$ keeping the total mutual-visibility property for $X$ in $G'$. Moreover, if we add a vertex $i\in V(G)\setminus I$ to $X$, then since $I$ is an independent dominating set of $G$, there is $j\in I$ such that $e=ij\in E(G)$. This means that the vertices of the clique $K_t$ adjacent to the vertex $v_e\in V(K_m)$ are not visible with any other vertex from the clique $K_{t}$ whose vertices are adjacent to the vertex $x$. This is a contradiction, which implies that $X$ is a maximal total mutual-visibility set, as desired. Thus, $\mu_t^-(G')\le t(m+1)+i(G)$.

On the other hand, let $X'$ be a maximal total mutual-visibility set in $G'$ of the smallest cardinality. Notice that none of the vertices of the set $\{x\}\cup V(K_m)$ can be in $X'$, since they are cut-vertices of $G'$. Also, all the vertices from all the $m+1$ involved cliques $K_t$ whose vertices are simplicial vertices must be in $X'$ as well, since they are simplicial vertices in $G'$. Now, if $I'=X'\cap V(G)$ is not an independent set, then there is an edge $e=ij\in E(G)$ such that $i,j\in I'$. Thus, similarly to a previous comment, we will find vertices that are not $X'$-visible, which is not possible. Thus, $I'$ must be independent. In addition, if $I'$ is not a maximal independent set of $G$, then $X'$ is not a maximal total mutual-visibility set, since it can be extended to a larger total mutual-visibility set by adding more vertices from $G$ being not adjacent to any vertex of $I'$, which is not possible. Thus, $I'$ is a maximal independent set of $G$. Consequently, we deduce that $\mu_t^-(G')=|X'|\ge |I'|+t(m+1)\ge i(G)+t(m+1)$. Therefore, we deduce that $\mu_t^-(G')=t(m+1)+i(G)$, by which the reduction from the \textsc{Independent dominating set problem} is complete. It is also easy to see that one can construct $G'$ from $G$ in polynomial time.
\end{proof}

The reduction above (as well as the ones presented in~\cite[Theorem 3.1]{CDDH}) cannot be directly adapted to prove a similar conclusion to the above one, for the case of the lower mutual-visibility problem. This problem seems to be more challenging, and we leave it as an open question.

\section{General bounds and extremal families}
\label{sec:bounds}
We start this section with general bounds on the (total) mutual-visibility number.

\begin{prop}
\label{prp:basic}
If $G$ is a connected graph, then
\begin{enumerate}[{\rm (i)}]
\item $1\le  \ml(G)\le \mu(G)$, and $\ml(G)=1$ if and only if $G=K_1$;

\item $0\le \tml(G)\le \mu_t(G)$ and $\tml(G)=0$ if and only if $\mu_t(G)=0$.
\end{enumerate}
\end {prop}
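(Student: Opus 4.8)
The plan is to reduce everything to two elementary observations. First, both parameters are well defined: starting from a single vertex (for $\ml$) or from the empty set (for $\tml$) and greedily adding vertices while the defining property survives, the process terminates at a maximal set, so maximal sets exist. Second, in both settings the \emph{largest} set is itself maximal --- if it could be extended it would not be largest --- so the maximum cardinality over all maximal sets coincides with $\mu(G)$, respectively $\mu_t(G)$. Since $\ml(G)$ and $\tml(G)$ are the \emph{minimum} cardinalities over the very same families of maximal sets, the upper bounds $\ml(G)\le\mu(G)$ and $\tml(G)\le\mu_t(G)$ follow at once.

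For part (i), the lower bound $\ml(G)\ge 1$ holds because every single vertex forms a mutual-visibility set, so the empty set can always be extended and is never maximal (recall $G$ is connected, hence nonempty). For the characterization I would invoke the key fact that \emph{any pair} $\{u,v\}$ is automatically a mutual-visibility set: the internal vertices of a shortest $u,v$-path are by definition distinct from its endpoints, so $u$ and $v$ are $\{u,v\}$-visible. Consequently, if $|V(G)|\ge 2$, any singleton $\{v\}$ can be extended by an arbitrary second vertex, so no singleton is maximal and in fact $\ml(G)\ge 2$. Thus $\ml(G)=1$ forces $|V(G)|=1$, i.e. $G=K_1$; the converse is trivial, since for $K_1$ the only vertex set cannot be extended.

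For part (ii), the bound $\tml(G)\ge 0$ is immediate, and the crux is the equivalence $\tml(G)=0\iff\mu_t(G)=0$. Here I would first record the monotonicity that total mutual-visibility sets are closed under taking subsets: a shortest path witnessing the $X$-visibility of a pair also witnesses its $X'$-visibility for every $X'\subseteq X$. Combined with the observation that $\emptyset$ is (vacuously) always a total mutual-visibility set, this yields both directions. If $\mu_t(G)=0$, then no nonempty set is a total mutual-visibility set, so no vertex can be added to $\emptyset$; hence $\emptyset$ is maximal by Remark~\ref{rem:cond-maximal}, giving $\tml(G)=0$. Conversely, if $\tml(G)=0$, some maximal set has cardinality $0$, so $\emptyset$ is maximal, which means $\{w\}$ is not a total mutual-visibility set for any $w\in V(G)$; by the downward closure no total mutual-visibility set can contain a vertex, whence $\emptyset$ is the only one and $\mu_t(G)=0$.

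The argument is essentially bookkeeping, and the one point requiring genuine care --- the likeliest source of a slip --- is the asymmetry between the two parts. For ordinary mutual-visibility every single vertex already forms a valid set, which pins the floor at $1$ and, together with the fact that every pair is a mutual-visibility set, forces $\ml(G)\ge 2$ whenever $G$ has at least two vertices. For total mutual-visibility, by contrast, the admissible ``seed'' is the empty set while singletons may fail to be total mutual-visibility sets, which is exactly why the floor drops to $0$; the clean equivalence in part (ii) then hinges on pairing this with the downward closure of total mutual-visibility sets, and it is important not to transport the lower bound $1$ from part (i) by analogy.
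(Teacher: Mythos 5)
Your proof is correct and follows essentially the same route as the paper: the bounds are routine, part (i) hinges on the fact that any two vertices (in the paper, the endpoints of an edge) form a mutual-visibility set so that $\ml(G)\ge 2$ whenever $n(G)\ge 2$, and part (ii) reduces to observing that $\emptyset$ is a maximal total mutual-visibility set exactly when no nonempty total mutual-visibility set exists. Your explicit appeal to downward closure of total mutual-visibility sets is the same fact the paper records just before Remark~\ref{rem:cond-maximal}, so the arguments coincide in substance.
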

\proof Note that the inequalities in (i) are trivial. To see the second statement of (i), note that $K_1$ is the only (connected) graph with $\ml(K_1)=1$. Indeed, if $G$ is a graph with an edge $e=uv$, then it is clear that $\{u,v\}$ is a mutual-visibility set, which implies $\ml(G)\ge2$.

Concerning (ii), it is again trivial that $0\le \tml(G)\le \mu_t(G)$ and that $\mu_t(G)=0$ implies $\tml(G)=0$. Now, let $\tml(G)=0$, and suppose, to the contrary, that $\mu_t(G)>0$. Then, there exists a set $X\subset V(G)$, which is a total mutual-visibility set of cardinality $|X|>0$. Hence, the empty set, $\emptyset$, is not a maximal mutual-visibility set, and so $\tml(G)>0$, a contradiction.
\qed

\bigskip

It is easy to see that a vertex, which is the center of a convex $P_3$, does not lie in a total mutual-visibility set (because the ends of the convex $P_3$ would in this case not be visible). Tian and Klav\v{z}ar~\cite{TK} extended this observation by giving a characterization of the graphs $G$ with $\mu_t(G)=0$, which in view of Proposition~\ref{prp:basic} yields the following result.

\begin{cor}
Let $G$ be a graph. Then $\tml(G)=0$ if and only if every vertex is the central vertex of a convex $P_3$ in $G$.
\end{cor}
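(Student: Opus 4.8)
The plan is to leverage Proposition~\ref{prp:basic}(ii), which reduces the claim to proving that $\mu_t(G)=0$ if and only if every vertex of $G$ is the central vertex of a convex $P_3$. Since the corollary explicitly attributes the characterization of graphs with $\mu_t(G)=0$ to Tian and Klav\v{z}ar~\cite{TK}, the cleanest route is to invoke that external characterization directly and then chain it with Proposition~\ref{prp:basic}(ii). Thus the skeleton is: $\tml(G)=0 \iff \mu_t(G)=0 \iff$ (every vertex is the center of a convex $P_3$), where the first equivalence is Proposition~\ref{prp:basic}(ii) and the second is the cited theorem.

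For completeness, I would also sketch the underlying combinatorial content so the corollary is self-contained in spirit. First I would record the easy direction, which is flagged in the text immediately before the statement: if a vertex $v$ is the central vertex of a convex $P_3$, say with endpoints $a,b$ so that $avb$ is the unique shortest $a,b$-path and $d_G(a,b)=2$, then $v$ cannot belong to any total mutual-visibility set. Indeed, if $v\in X$, then convexity forces every shortest $a,b$-path to pass through $v$, so $a$ and $b$ fail to be $X$-visible, contradicting that $X$ is total. Hence if \emph{every} vertex is such a center, no nonempty total mutual-visibility set exists, giving $\mu_t(G)=0$.

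The reverse direction is the substantive one and is exactly what Tian and Klav\v{z}ar establish. The contrapositive is the natural target: if some vertex $v$ is \emph{not} the central vertex of any convex $P_3$, then $\{v\}$ is a total mutual-visibility set, so $\mu_t(G)\ge 1 >0$. To see this, take any pair $u,w\in V(G)$; I must produce a shortest $u,w$-path avoiding $v$ as an internal vertex. If $v\notin I_G(u,w)$ this is immediate, and if $v$ equals $u$ or $w$ it is internal to no path, so the only issue is when $v$ is a strict internal vertex of every shortest $u,w$-path. The hypothesis that $v$ is not the center of a convex $P_3$ must be used to rule this out, and this is where the real work lies: one argues that a vertex forced to be internal on all shortest paths between some pair produces (by shrinking the pair to distance-$2$ neighbors of $v$ along such a path) a convex $P_3$ centered at $v$.

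I expect the main obstacle to be precisely this reverse implication, namely showing that ``$v$ avoidable for every pair'' fails only when $v$ centers a convex $P_3$. The delicate point is the reduction from an arbitrary pair $u,w$ to a distance-two configuration: one needs to verify that if $v$ lies on every shortest $u,w$-path, then choosing the neighbors $a,b$ of $v$ on such a path yields $d_G(a,b)=2$ with $v$ the unique common structure, and that $I_G(a,b)=\{a,v,b\}$ so the induced $P_3$ is genuinely convex. Since this is exactly the content proved in~\cite{TK}, in the final write-up I would cite that result for the hard direction rather than reproving it, presenting the corollary as the composition of Proposition~\ref{prp:basic}(ii) with the Tian--Klav\v{z}ar characterization.
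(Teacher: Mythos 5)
Your proposal is correct and follows essentially the same route as the paper: the corollary is obtained by chaining Proposition~\ref{prp:basic}(ii) (which gives $\tml(G)=0$ iff $\mu_t(G)=0$) with the Tian--Klav\v{z}ar characterization of graphs with $\mu_t(G)=0$. The additional sketch of the underlying convex-$P_3$ argument is accurate but not needed, since the paper likewise treats that direction as an external citation.
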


We continue with an auxiliary result, which can also be of independent interest.

\begin{lem}
\label{lem:cut-edge}
Let $e=uv$ be an edge in a connected graph $G$. Then $\{u,v\}$ is a maximal mutual-visibility set of $G$ if and only if $e$ is a cut-edge in $G$.
\end{lem}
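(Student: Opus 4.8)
The plan is to prove both directions of this biconditional by carefully analyzing when the two-element set $\{u,v\}$ can be extended while preserving mutual-visibility. The key observation is that $\{u,v\}$ is \emph{always} a mutual-visibility set whenever $uv\in E(G)$ (the edge itself is a shortest path with no internal vertices), so maximality is the only issue. Thus the entire proof reduces to the statement: \emph{no vertex can be added to $\{u,v\}$ if and only if $e$ is a cut-edge}.

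For the backward direction, suppose $e=uv$ is a cut-edge. I would argue by contradiction: assume $\{u,v\}$ is not maximal, so there is some $w\in V(G)\setminus\{u,v\}$ with $\{u,v,w\}$ a mutual-visibility set. Since $e$ is a cut-edge, removing it disconnects $G$ into two components, one containing $u$ and one containing $v$; say $w$ lies in the component $C_u$ containing $u$. The crucial point is that every $v,w$-path must use the edge $uv$ (since $e$ is the only connection between the two components), hence every shortest $v,w$-path passes through $u$ as an internal vertex. This means $v$ and $w$ are not $\{u,v,w\}$-visible, contradicting the assumption. Therefore $\{u,v\}$ is maximal.

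For the forward direction, I would prove the contrapositive: if $e$ is \emph{not} a cut-edge, then $\{u,v\}$ is not maximal. By the fact recalled earlier in the excerpt, an edge is a cut-edge if and only if it lies on no cycle; so if $e$ is not a cut-edge, then $e$ lies on some cycle, equivalently there is a $u,v$-path $P$ that avoids the edge $e$. I would take a shortest such path and let $w$ be a suitable vertex on it; the natural candidate is a common neighbor configuration or, more robustly, a vertex witnessing that $G-e$ still connects $u$ to $v$. The cleaner route is to show directly that some vertex $w$ can be added: because $u$ and $v$ lie on a common cycle, there exist two internally disjoint $u,v$-paths, and one can select a vertex $w$ for which both $u,w$ and $v,w$ admit shortest paths avoiding the other special vertex, while $u,v$ remain visible through the edge $e$ itself. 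I anticipate the main obstacle here lies precisely in this selection step: verifying that the chosen $w$ is simultaneously $\{u,v,w\}$-visible with \emph{both} $u$ and $v$ requires care, since a shortest $u,w$-path or $v,w$-path might inadvertently pass through the third vertex.

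To handle this obstacle cleanly, I would exploit the freedom to choose $w$ as a neighbor of $u$ (or of $v$) lying on a cycle through $e$. If $w\in N(u)$ with $w\neq v$, then $u,w$ are visible via the edge $uw$, and $u,v$ via the edge $uv$; the only nontrivial requirement is that $v,w$ be $\{u,v,w\}$-visible, i.e.\ that some shortest $v,w$-path avoid $u$. Since $e$ is not a cut-edge, I expect one can always arrange this by choosing $w$ on a shortest cycle through $e$, guaranteeing a short $v,w$-connection outside the edge $uv$. Once $w$ is fixed with these three pairwise-visibility conditions verified, $\{u,v,w\}$ is a mutual-visibility set strictly containing $\{u,v\}$, establishing non-maximality and completing the contrapositive.
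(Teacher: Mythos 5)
Your backward direction (cut-edge $\Rightarrow$ maximal) is correct and is essentially the paper's argument: if $w$ lies in the component of $G-e$ containing $u$, then every $v,w$-path, and in particular every shortest one, has $u$ as an internal vertex.

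The forward direction, however, has a genuine gap at exactly the step you flagged as the obstacle, and your proposed resolution does not close it. You choose $w$ to be a neighbor of $u$ (with $w\neq v$) on a shortest cycle through $e$, and you assert that some shortest $v,w$-path then avoids $u$. This is false in general: take $G=C_5$ with the cycle $u\,u_1\,u_2\,u_3\,v\,u$ and $e=uv$. Here the shortest cycle through $e$ is the whole graph, and $w=u_1$ is the neighbor of $u$ on it; but $d_G(v,u_1)=2$, realized \emph{only} by the path $v\,u\,u_1$, while the detour $u_1\,u_2\,u_3\,v$ has length $3$. So $v$ and $w$ are not $\{u,v,w\}$-visible, and $\{u,v,u_1\}$ is not a mutual-visibility set. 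More generally, whenever the shortest cycle through $e$ has length at least $5$, a neighbor $u_1$ of $u$ on that cycle satisfies $d_G(v,u_1)=2$ with $u$ as the unique midpoint (any other common neighbor of $u_1$ and $v$ would create a shorter cycle through $e$), so this choice of $w$ always fails there. The correct choice --- and the one the paper makes --- is the (approximate) \emph{midpoint} of a shortest cycle $u=u_0u_1\cdots u_k=v\,u$ through $e$, namely $w=u_{\lfloor k/2\rfloor}$: minimality of the cycle forces the two arcs from $w$ to $u$ and to $v$ to be shortest paths avoiding the other endpoint, so $\{u,w,v\}$ is a mutual-visibility set and $\{u,v\}$ is not maximal. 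With that substitution your contrapositive goes through.
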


\proof
Clearly, $\{u,v\}$ is a mutual-visibility set. Assume first $\{u,v\}$ is a maximal mutual-visibility set, and suppose that $e$ is not a cut-edge. Then $e=uv$ lies on a cycle, and let $u=u_0u_1\cdots u_k=vu$, where $k\ge 2$, be a shortest cycle on which $e$ lies. Note that $\{u,u_{\lfloor\frac{k}{2}\rfloor},v\}$ is a mutual-visibility set, which is a contradiction to the maximality of $\{u,v\}$ as a mutual-visibility set. Conversely, if $e=uv$ is a cut-edge in $G$, then for any vertex $w\in V(G)\setminus\{u,v\}$  either $u$ lies on every (shortest) $v,w$-path in $G$ or $v$ lies on every (shortest) $u,w$-path in $G$. This implies that $\{u,v\}$ is a maximal mutual-visibility set.
\qed
\bigskip

Using Lemma~\ref{lem:cut-edge}, we next characterize the graphs with the lower mutual-visibility number equal to $2$.

\begin{thm}
\label{thm_cut-edge}
Let $G$ be a connected graph on at least two vertices. Then, $\ml(G)=2$ if and only if $G$ has a cut-edge.
\end{thm}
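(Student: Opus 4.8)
The plan is to prove both implications of the characterization $\ml(G)=2$ iff $G$ has a cut-edge, leaning heavily on Lemma~\ref{lem:cut-edge}, which already tells us exactly when an edge $\{u,v\}$ forms a \emph{maximal} mutual-visibility set. The key observation is that a maximal mutual-visibility set of size $2$ must consist of two adjacent vertices: any two vertices $\{u,v\}$ already form a mutual-visibility set, but if $u$ and $v$ were non-adjacent, lying at distance $d_G(u,v)\ge 2$, then some internal vertex $w$ on a shortest $u,v$-path could be added while keeping the set mutually visible, contradicting maximality. So a maximal mutual-visibility set of size exactly $2$ is precisely a set $\{u,v\}$ with $uv\in E(G)$ that is maximal, and by Lemma~\ref{lem:cut-edge} this is equivalent to $e=uv$ being a cut-edge.

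For the forward direction, suppose $\ml(G)=2$. By definition there is a maximal mutual-visibility set $X$ with $|X|=2$, say $X=\{u,v\}$. I would first argue that $u$ and $v$ must be adjacent: if not, choose any internal vertex $w$ on a shortest $u,v$-path; then $\{u,v,w\}$ is still a mutual-visibility set (the three pairwise visibilities are easy to check, since $u,v$ remain visible along the same path and $w$ is visible to each endpoint via sub-paths of that shortest path), contradicting the maximality of $X$. Hence $uv\in E(G)$, and since $\{u,v\}$ is a maximal mutual-visibility set, Lemma~\ref{lem:cut-edge} gives that $e=uv$ is a cut-edge.

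For the converse, suppose $G$ has a cut-edge $e=uv$. By Lemma~\ref{lem:cut-edge}, $\{u,v\}$ is a maximal mutual-visibility set, so there exists a maximal mutual-visibility set of cardinality $2$, giving $\ml(G)\le 2$. Combined with Proposition~\ref{prp:basic}(i), which asserts $\ml(G)\ge 2$ for any connected graph $G$ on at least two vertices (equivalently, $\ml(G)=1$ only for $K_1$), we conclude $\ml(G)=2$.

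The only genuinely delicate point, and the one I would state most carefully, is the claim in the forward direction that a non-adjacent pair $\{u,v\}$ can always be properly extended. This is where one must confirm that adding an interior vertex $w$ of a shortest $u,v$-path truly preserves mutual visibility for all three pairs; the pair $\{u,v\}$ is witnessed by the chosen shortest path (whose only set-vertices among its internal vertices would be $w$, but $w$ is an endpoint of the pair being checked only when paired with $u$ or $v$), and the pairs $\{u,w\}$ and $\{v,w\}$ are witnessed by the respective sub-paths, which are themselves shortest and contain no third vertex of $\{u,v,w\}$ internally. I expect this verification to be routine but worth spelling out, as it is the crux that forces maximal size-$2$ sets to be adjacent pairs and hence brings Lemma~\ref{lem:cut-edge} into play.
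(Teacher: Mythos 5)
The backward direction is fine and matches the paper. The forward direction, however, contains a genuine gap: your reduction to ``a maximal mutual-visibility set of size $2$ must consist of two adjacent vertices'' is false. If $\{u,v\}$ is a mutual-visibility set with $d_G(u,v)\ge 2$ and you add an internal vertex $w$ of a shortest $u,v$-path, then the pair $u,v$ must still be witnessed by \emph{some} shortest $u,v$-path avoiding $w$; the path you chose does not qualify, since it now has the set-vertex $w$ in its interior (the definition requires $V(P)\cap X\subseteq\{u,v\}$, and $w\in V(P)\cap X$ violates this). Your parenthetical remark that the only set-vertex among the internal vertices ``would be $w$'' is precisely the problem, not a reassurance. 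Concretely, in the path $P_n$ the two leaves form a maximal mutual-visibility set of size $2$ at distance $n-1$: adding any interior vertex kills the visibility of the leaves because the shortest path between them is unique. So non-adjacent maximal pairs genuinely occur, and Lemma~\ref{lem:cut-edge} cannot be invoked directly.

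This is why the paper's proof of the forward direction is substantially longer. When $uv\notin E(G)$, it first uses maximality of $\{u,v\}$ to show that $I_G(u,v)$ is exactly the vertex set of a single geodesic $P: u=u_0\cdots u_k=v$ (otherwise a vertex of $I_G(u,v)\setminus V(P)$ could be added, since $P$ itself then witnesses $u,v$-visibility); it then shows no vertex outside $V(P)$ is adjacent to an internal vertex of $P$; and finally it argues that the edge $uu_1$ --- not the non-edge $uv$ --- is a cut-edge, by analyzing a hypothetical second $u,v$-path $Q$ avoiding the interior of $P$ and extracting a ``middle'' vertex $w$ of $Q$ that could be added to $\{u,v\}$. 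To repair your proof you would need to supply this case $d_G(u,v)\ge 2$ in full; the adjacency shortcut does not work.
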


\proof
If $G$ has a cut-edge $e=uv$, then, by Lemma~\ref{lem:cut-edge}, $\{u,v\}$ is a maximal mutual-visibility set. Hence $\ml(G)\le 2$, and, by Proposition~\ref{prp:basic}~(i), we get $\ml(G)=2$.

Conversely, let $\ml(G)=2$, and let $\{u,v\}$ be a maximal mutual-visibility set in $G$. We may assume that $uv\notin E(G)$, for otherwise the proof is done by Lemma~\ref{lem:cut-edge}. Hence, $d_G(u,v)=k\ge 2$. First, we claim that $I_G(u,v)$ consists only of vertices in exactly one (shortest) $u,v$-path. Let $P:u=u_0 \cdots u_k=v$ be a shortest $u,v$-path. Suppose now that $I_G(u,v)\ne V(P)$, and let $w$ be a vertex closest to $u$ that is in $I_G(u,v)\setminus V(P)$. Then, $\{u,w,v\}$ is a mutual-visibility set. Indeed, the shortest $u,v$-path $P$ avoids $w$, and since $w\in I_G(u,v)$, there is a shortest $u,w$-path avoiding $v$ and a shortest $w,v$-path avoiding $u$. This is a contradiction with the assumption that $\{u,v\}$ is a maximal mutual-visibility set.

Next, we claim that there is no vertex in $V(G)\setminus V(P)$ that is adjacent to a vertex in $\{u_1,\ldots,u_{k-1}\}$. Suppose that $x\in V(G)\setminus V(P)$ is adjacent to $u_i$, for some $i\in [k-1]$.  Note that $i\le d_G(u,x)\le i+1\le k$, and $k-i\le d_G(x,v)\le k-i+1\le k$. This implies that $v$ does not lie on a shortest $u,x$-path and $u$ does not lie on a shortest $x,v$-path. In addition, $x\notin I_G(u,v)$, thus the set $\{u,x,v\}$ forms a mutual-visibility set, a contradiction.

Finally, we claim that $uu_1$ is a cut-edge in $G$ (in fact, by similar arguments one can prove that every edge of $P$ is a cut-edge in $G$). Suppose that $uu_1$ is not a cut-edge. Due to the observation in the previous paragraph, every $u,u_1$-path that is not just the path $uu_1$ crosses $v$. Let $Q$ be a shortest $u,v$-path in the subgraph of $G$ induced by $V(G)\setminus \{u_1,\ldots,u_{k-1}\}$. Then $Q$ is an induced path also in $G$, and there is a vertex $w$ in $Q$ such that $|d_G(u,w)-d_G(w,v)|\le 1$. Hence, a shortest $u,w$-path in $G$ is a subpath of $Q$, and a shortest $w,v$-path in $G$ is the complementary subpath of $Q$. In addition, since $w\notin V(P)$, we infer that $\{u,w,v\}$ is a mutual-visibility set in $G$, a contradiction with maximality of $\{u,v\}$. Hence, $uu_1$ is a cut-edge.
\qed
\bigskip

In connection with the result above and the lower general position number previously defined, it was proved in \cite{DiSKKTY} that a graph $G$ satisfies $\gp^-(G)=2$ if and only if $G$ has a universal line. For a metric space (or a graph) $M = (X, d_M)$ and two elements $x,y$ of $X$, a {\em line} ${\cal L}_M(x,y)$ induced by $x,y$ is the set of elements of $X$ given as follows:
$$\{w\in X:\ d_M(x,y) = d_M(x,w) + d_M(w,y)\ {\rm or}\ d_M(x,y) = |d_M(x,w) - d_M(w,y)|\}\,.$$
In this sense, a line is called {\em universal} if it contains the whole set $X$. The class of graphs that have a universal line is not yet known. Indeed, Chen and Chv\'atal~\cite{ChenChv} conjectured that if the number of lines in a metric space is smaller than $|X|$, then $M$ has a universal line, and this question remains open; see~\cite{chvatal-2018,rodriguez-2022} for more on this problem.

Related to Theorem \ref{thm_cut-edge}, since any graph having a cut-edge has a universal line, we deduce that if $\ml(G)=2$ for some graph $G$, then $G$ has a universal line, and so, clearly $\gp^-(G)=2$. The opposite of this is not true, since there are several graphs having a universal line and having no cut-edge  (grid graphs for example), and so, $\ml(G)>2$ in view of Theorem \ref{thm_cut-edge}.

\bigskip

\section{Relationships between the two parameters}
\label{sec:two-parameters}

It is a direct consequence of the definitions that $\mu_t(G)\le \mu(G)$ holds for every graph $G$; see~\cite{CDKY}. However, this is not the case for the lower variants of mutual-visibility, which can be demonstrated by the class of block graphs whose definition we now recall.

A {\em block} in a graph $G$ is a maximal subgraph in $G$ having no cut-vertex. A graph $G$ is a {\em block graph} if each of its blocks is a complete graph. The block graphs are precisely the diamond-free chordal graphs~\cite{BM}.


\begin{thm}
If $G$ is a connected block graph on at least two vertices, $S$ is the set of its simplicial vertices, and $W$ is a maximal clique of minimum cardinality in $G$, then
\begin{enumerate}[{\rm (i)}]
\item $\tml(G)=|S|$, and
\item $\ml(G)=|V(W)|$.
\end{enumerate}
\label{thm:block}
 \end{thm}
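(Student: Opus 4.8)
The plan is to prove the two equalities separately, using the structure of block graphs: every block is a clique, simplicial vertices live in the "leaf" blocks, and the cut-vertices are exactly the non-simplicial vertices. For part (i), I would show that $S$ itself is a total mutual-visibility set and that it is maximal, and conversely that every maximal total mutual-visibility set must contain all of $S$ and nothing else. For part (ii), I would exhibit a minimum maximal mutual-visibility set of size $|V(W)|$ by taking the vertices of the smallest maximal clique, and then argue that no maximal mutual-visibility set can be smaller.

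\textbf{Part (i).} First I would observe that no cut-vertex of $G$ can belong to any total mutual-visibility set: a cut-vertex $c$ is the center of a convex $P_3$ (take neighbors in two distinct blocks), so its two endpoints fail to be $c$-visible; by the characterization recalled just before Lemma~\ref{lem:cut-edge} (or directly), $c$ lies in no total mutual-visibility set. Since in a block graph the non-simplicial vertices are precisely the cut-vertices, any total mutual-visibility set is contained in $S$. Next I would verify that $S$ is itself a total mutual-visibility set: given any two vertices $y,z\in V(G)$, I would exhibit a shortest $y,z$-path whose internal vertices are all cut-vertices (hence outside $S$); this uses the tree-like structure of blocks, namely that the unique shortest route between vertices in different blocks passes through cut-vertices only, and within a single clique the path is a single edge. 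Finally, $S$ is maximal because any vertex outside $S$ is a cut-vertex and cannot be added. Hence $\tml(G)=|S|$.

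\textbf{Part (ii).} For the upper bound I would take a maximal clique $W$ of minimum cardinality and show $V(W)$ is a maximal mutual-visibility set. That $V(W)$ is a mutual-visibility set is immediate since $W$ is complete. For maximality I would argue that adding any vertex $z\notin V(W)$ breaks visibility: since $W$ is a maximal clique, $z$ is non-adjacent to some $w\in V(W)$, and in a block graph the shortest $z,w$-path is forced through a cut-vertex of $W$ that lies in $V(W)$, so $z$ and $w$ are not $V(W)\cup\{z\}$-visible. This yields $\ml(G)\le|V(W)|$. For the matching lower bound I would let $X$ be any maximal mutual-visibility set and show $|X|\ge|V(W)|$. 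The natural idea is to use the Neighborhood Lemma (Lemma~\ref{prp:neighborhood}) together with the fact that in a block graph every simplicial vertex $x$ satisfies its hypotheses, forcing structural constraints; alternatively, I would argue that $X$ must "occupy" a whole block in a suitable sense, so that it contains at least as many vertices as the smallest maximal clique.

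\textbf{Main obstacle.} I expect the lower bound $\ml(G)\ge|V(W)|$ in part (ii) to be the crux. The delicate point is ruling out a maximal mutual-visibility set that is "spread thinly" across several blocks using few vertices per block; I must show that maximality forces $X$ to contain all vertices of some block, and that the cheapest such block to fill completely has size exactly $|V(W)|$. The key lemma to establish is that whenever $X$ meets the interior of the block structure without filling a clique, there remains a simplicial or near-simplicial vertex that can still be added, contradicting maximality — making essential use of the diamond-free chordal characterization of block graphs so that distinct blocks share at most one vertex.
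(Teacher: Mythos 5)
Your part~(i) and the upper bound $\ml(G)\le |V(W)|$ in part~(ii) are correct and match the paper's argument (the paper phrases (i) via convex $P_3$'s rather than cut-vertices, but in a block graph these give the same set of excluded vertices). The problem is the lower bound $\ml(G)\ge |V(W)|$, which you yourself identify as the crux but do not actually prove, and the key lemma you propose for it is false. You claim that ``maximality forces $X$ to contain all vertices of some block.'' Take $G=K_{1,3}$ with center $c$ and leaves $a,b,d$: the set $X=\{a,b,d\}$ is a mutual-visibility set (each pair is at distance $2$ via $c\notin X$), and it is maximal because adding $c$ destroys the visibility of $a$ and $b$. Yet $X$ contains no block of $G$ (every block is an edge through $c$). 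So an argument built on filling a block cannot work. Your alternative suggestion, the Neighborhood Lemma, is also a dead end here: it produces \emph{upper} bounds on $\ml$ by exhibiting a particular maximal set, and says nothing about an arbitrary maximal mutual-visibility set being large.

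The paper's actual argument for the lower bound is different and worth noting. Suppose $T$ is a maximal mutual-visibility set with $|T|<|V(W)|$. Using the tree-like block structure, one finds a maximal clique $W'$ such that the vertices of $T$ lie in pairwise distinct components of $G-E(W')$ (each component of $G-E(W')$ contains exactly one vertex of $W'$). Since $|V(W')|\ge |V(W)|>|T|$, some component $C$ contains no vertex of $T$; letting $x$ be the vertex of $W'$ in $C$, every vertex of $T$ reaches $x$ by a geodesic entering $W'$ at its own ``gateway'' vertex of $W'$ and then crossing a single edge of $W'$, which shows $T\cup\{x\}$ is still a mutual-visibility set, contradicting maximality. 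The count is therefore against the separating clique $W'$, not against a block contained in $T$. To repair your proof you would need to supply this (or an equivalent) separation argument; as written, part~(ii) is incomplete.
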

\proof
To prove (i) note that any vertex in $V(G)\setminus S$ is the center of a convex $P_3$, hence it cannot be in a total mutual-visibility set. Therefore any total mutual-visibility set is a subset of $S$. In addition, any subset of $S$ is clearly a total mutual-visibility set in $G$. We readily infer that $S$ is the unique maximal total mutual-visibility set in $G$, thus $\tml(G)=|S|$.

For the proof of (ii), first note that vertices of a clique $W$ form a maximal mutual-visibility set in $G$, and so $\ml(G)\le |V(W)|$. Now, let $T$ be a maximal mutual-visibility set in $G$, and for the purpose of getting a contradiction assume that $|T|<|V(W)|$. Since $G$ is a block graph, there exists a clique $W'$ in $G$ such that all vertices of $T$ lie in different components of $G-E(W')$. Since $|V(W')|\ge |V(W)|>|T|$, we infer that there is a component $C$ of $G-E(W')$ having no vertices of $T$. Let $x\in V(W')$ be the vertex that lies in $C$. Now, it is easy to see $T\cup\{x\}$ is a mutual-visibility set of $G$, a contradiction with maximality of $T$. This gives $\ml(G)=|V(W)|$.
\qed

\bigskip

Theorem~\ref{thm:block} implies that the difference $\tml(T)-\ml(T)$ can be arbitrarily large.  In particular, if $G$ is a tree, then $\tml(G)$ equals the number of its leaves, while $\ml(G)=2$.
To see that the  reversed inequality is also possible, consider the graphs $S(K_n)$, which are obtained from the complete graphs $K_n$ by subdividing each of its edges exactly once.
We say that a vertex of $S(K_n)$ is an {\em original vertex} if
it corresponds to a vertex of $K_n$, and it is a {\em subdivided vertex} if it is the result of the subdivision of an edge of $G$.

\begin{thm} If $n\ge 3$, then
\begin{enumerate}[{\rm (i)}]
\item $\tml(S(K_n))=0$, and
\item $\ml(S(K_n))=n$.
\end{enumerate}
\label{thm:subdiv-complete}
\end{thm}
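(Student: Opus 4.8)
The plan is to prove the two parts by different means: part~(i) follows from the characterization of $\tml=0$, while part~(ii) needs a structural analysis of maximal mutual-visibility sets. Throughout I would fix coordinates by writing the original vertices as $u_1,\dots,u_n$ and, for each edge $ij$ of $K_n$, the subdivided vertex $w_{ij}$, with $u_i\sim w_{ij}\sim u_j$; note $\deg(w_{ij})=2$ and $\deg(u_i)=n-1$.

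For (i), I would invoke the corollary to Proposition~\ref{prp:basic} stating that $\tml(G)=0$ if and only if every vertex of $G$ is the central vertex of a convex $P_3$, and exhibit such a $P_3$ at each vertex. For an original vertex $u_i$ (which has at least two subdivided neighbors since $n\ge3$) the path $w_{ij}-u_i-w_{ik}$ works, because the only common neighbor of $w_{ij}$ and $w_{ik}$ is $u_i$, so $I(w_{ij},w_{ik})=\{w_{ij},u_i,w_{ik}\}$ and the $P_3$ is convex. For a subdivided vertex $w_{ij}$ the path $u_i-w_{ij}-u_j$ works, since $w_{ij}$ is the unique common neighbor of $u_i,u_j$, whence $I(u_i,u_j)=\{u_i,w_{ij},u_j\}$. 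This gives $\tml(S(K_n))=0$. For the upper bound in (ii), I claim the set $A^*$ of all $n$ original vertices is a maximal mutual-visibility set: it is mutual-visibility because the only shortest $u_i,u_j$-path is $u_i-w_{ij}-u_j$, whose interior vertex $w_{ij}$ lies outside $A^*$; and it is maximal because inserting any $w_{ij}$ destroys visibility of the pair $u_i,u_j$, while no original vertex remains. Hence $\ml(S(K_n))\le n$.

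The heart of the argument is the lower bound. I would encode a mutual-visibility set $X$ as a pair $(A,B)$ with $A\subseteq V(K_n)$ its original vertices and $B\subseteq E(K_n)$ its subdivided vertices, and let $H=(V(K_n),B)$ be the associated graph on $[n]$. Translating the shortest-path structure, the mutual-visibility property forces: (a) no edge of $B$ joins two vertices of $A$ (else those two originals fail to see each other through the unique intermediate subdivided vertex); and (b) every vertex of $A$ has degree at most $1$ in $H$ (two $B$-edges sharing a vertex of $A$ would be mutually invisible). I would then prove two maximality-based claims. \emph{Claim A}: every element of $[n]$ either lies in $A$ or is non-isolated in $H$; otherwise, using~(a), one checks the corresponding original vertex could be added to $X$. \emph{Claim F}: every \emph{tree} component of $H$ meets $A$; otherwise, taking a leaf $i$ of such a component and using that the component is triangle-free and disjoint from $A$, one verifies that $u_i$ can be added. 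With these, I would count component by component: a nontrivial component $C$ with $c$ vertices, $e$ edges and $a$ vertices in $A$ satisfies $a+e\ge c$ (if $C$ is a tree then $e=c-1$ and $a\ge1$ by Claim~F; if $C$ has a cycle then $e\ge c$). Summing over components and adding the isolated vertices, which lie in $A$ by Claim~A, yields $|X|=|A|+|B|\ge n$, so $\ml(S(K_n))\ge n$, completing the equality.

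The main obstacle will be the verification inside Claim~F that the leaf $u_i$ can actually be added, i.e.\ that \emph{no} pair already in $X$ loses visibility. The delicate subcase is a pair of two disjoint subdivided vertices $w_{ij},w_{cd}$: inserting $u_i$ blocks the two shortest paths routed through $u_i$, and I must exhibit a surviving path routed through the other endpoint $u_j$. Here the hypotheses that $C$ is a tree (so its neighbor $j$ of $i$ lies in no triangle of $H$) and that $C$ avoids $A$ are exactly what exclude the configurations in which all four shortest paths would be simultaneously blocked; the analogous check for original-subdivided pairs uses the same two facts. I expect this case analysis to be the most technical step, after which the component counting is immediate.
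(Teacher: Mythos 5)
Your proposal is correct, and for part (ii) it follows a genuinely different route from the paper. For (i) the difference is cosmetic: you verify directly that every vertex is the center of a convex $P_3$, whereas the paper cites the Tian--Klav\v{z}ar corollary that girth at least $5$ together with minimum degree at least $2$ forces $\mu_t=0$; both rest on the same characterization. For the lower bound in (ii), the paper fixes a maximal mutual-visibility set $T$ and runs an iterative layering argument (sets $O, S_0, O_1, S_1,\ldots$ with $|O_{k+1}|\le|S_k|$), then builds an auxiliary graph only on the leftover original vertices $O'$, proves $\delta\ge 2$ there via a maximality argument, and finally disposes of the special cases $|O'|\in\{1,2\}$ separately. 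You instead encode the whole set at once as a pair $(A,B)$ with $B\subseteq E(K_n)$, and count component by component in $H=(V(K_n),B)$: components containing a cycle already have $e\ge c$, while tree components must meet $A$ (your Claim F) and isolated vertices must lie in $A$ (your Claim A), giving $a+e\ge c$ uniformly and hence $|A|+|B|\ge n$. The engine is the same in both proofs --- a maximality argument showing that a ``sparse'' original vertex (isolated in $H$, or a leaf of a tree component disjoint from $A$) could be added, with the delicate visibility check being the pair of disjoint subdivided vertices, where exactly the triangle-freeness of the tree component at the leaf's neighbor and its disjointness from $A$ rescue a surviving shortest path --- but your global component count replaces the paper's layering and its terminal case analysis, which makes the bookkeeping cleaner and arguably easier to verify. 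One presentational nit: state explicitly that Claim F is applied only to tree components with at least one edge (single-vertex components are covered by Claim A), since ``taking a leaf'' presumes an edge exists.
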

\proof
For the proof of (i), note that $n\ge 3$ implies that $\delta(S(K_n))=2$, and the girth (i.e., the length of a shortest cycle) of $S(K_n)$) is $6$. By the result of Tian and Klav\v{z}ar~\cite[Corollary 3.4]{TK}, which follows from the characterization of the graphs with $\mu_t(G)=0$, every graph with girth at least $5$ and minimum degree at least $2$ has total mutual-visibility number $0$. Hence, we get $\tml(S(K_n))=0$.

Next, we concentrate on (ii). Let $T$ be a maximal mutual-visibility set of $S(K_n)$, and let $G=S(K_n)$. If all vertices in $T$ are original, then due to the maximality of $T$, it is only possible that $T$ consists of all original vertices, thus $|T|=n$. This readily implies that $\tml(G)\le n$. Now, suppose that $T$ contains also some subdivided vertices. We claim that $|T|\ge n$, which suffices for the proof of the theorem.

Let $O$ be the set of original vertices in $T$, and let $S$ the set of subdivided vertices in $T$. Next, let $S_0$, $S_0\subseteq S$, be the set of subdivided vertices in $T$ with a neighbor in $O$. (It is possible that $O$ is empty or that $O\ne \emptyset$ and $S_0=\emptyset$. In these cases the proof simplifies, and we will return to these cases in a later stage of the proof.) Clearly, if $x\in S_0$, then only one of the neighbors of $x$ is in $O$, for otherwise the two neighbors of $x$ from $O$ would not be $T$-visible, a contradiction. From a similar reason, each vertex in $O$ can have at most one neighbor in $S$. Moreover, if $x$ and $y$ are vertices in $S_0$, we claim that $x$ and $y$ have no common neighbor. Suppose that $u$ is a common neighbor of $x$ and $y$. Clearly, $u$ is an original vertex, but $u\notin O$, as noted above. Now, let $v'\in O$ be the other neighbor of $x$ and $v''\in O$ be the other neighbor of $y$, different from $u$. Note that $d_G(v',y)=3$, and there are only two shortest $v',y$-paths, one crossing $x\in T$, and the other crossing $v''\in T$. Hence, $y'$ and $z$ are not $T$-visible, a contradiction. We summarize this part by noting that each vertex in $O$ has at most one neighbor in $S_0$ and no two vertices in $S_0$ have a common neighbor.

Let $O_1$ be the set of original vertices that are not in $O$ and have a neighbor in $S_0$. Clearly, by the above, $|O_1|=|S_0|$.
Now, it is possible that there are no subdivided vertices in $S-S_0$ that have exactly one neighbor in $O_1$. On the other hand, if there are such vertices, let us denote the set of vertices in $S-S_0$ that have exactly one neighbor in $O_1$ by $S_1$, and let $O_2$ be the set of the neighbors of vertices in $S_1$ that do not belong to $O_1$. Since a vertex from $O_2$ is adjacent to at least one (subdivided) vertex from $S_2$, we infer $|O_2|\le |S_1|$.

 Continuing in this way, and assuming that the sets $O_1,\ldots, O_k$, and $S_0,\ldots, S_{k-1}$ have already been determined, we consider the following two possibilities. Either there are no subdivided vertices in $S-(S_0\cup\cdots\cup S_{k-1})$ that have exactly one neighbor in $O_k$ or there are such subdivided vertices. In the former case, the process is finished, and we denote the set of original vertices that do not belong to $O\cup O_1\cup\cdots\cup O_k$ by $O'$. Otherwise, the process continues, and we denote by $S_k$ the set of vertices in $S-(S_0\cup\cdots\cup S_{k-1})$ that have exactly one neighbor in $O_k$, and denote by $O_{k+1}$ the set of the neighbors of vertices in $S_k$ that do not belong to $O_k$. Since a vertex from $O_{k+1}$ is adjacent to at least one (subdivided) vertex from $S_k$, we infer $|O_{k+1}|\le |S_k|$.

 Since the graph is finite, the process eventually ends, with the sets $S_\ell$ and $O_{\ell +1}$, while the set $O'$ can be empty or not. If $O'=\emptyset$, then note that $$|T|=|O|+|S|\ge |O|+|S_0|+\cdots+ |S_\ell|\ge |O|+|O_1|+\cdots+ |O_{\ell+1}|=n,$$
and we are done. So, we are left to consider the case when $|O'|>0$ (Note that in the case when $O=\emptyset$, the set $O'$ consists of all original vertices. In addition, when $O\ne\emptyset$ and $S_0=\emptyset$, then $O'$ consists of all original vertices that are not in $O$.)

First, let $|O'|\ge 3$.
Consider the graph $H$ with $V(H)=O'$ and for two vertices $x,y\in O'$ we have $xy\in E(H)$ if and only if the subdivided vertex $u\in V(G)$, which is adjacent to $x$ and $y$, belongs to $S$. We claim that $\delta(H)\ge 2$. Suppose to the contrary that there is a vertex $x\in O'$ such that $\deg_H(x)\le 1$. Hence, $x$ has at most one common neighbor from $S$ with a vertex in $O'$. Since $|O'|\ge 3$, $x$ has another common neighbor $v$ with a vertex $y$ in $O'$ such that $v$ does not belong to $S$. We claim that $T'=T\cup \{v\}$ is a mutual-visibility set, which will be a contradiction with the maximality assumption on $T$. Clearly, $v$ is $T'$-visible with any vertex $u$ in $S$, which is adjacent to $x$ or to $y$, since $d_H(u,v)=2$, and their common neighbor is not in $T$. It is also easy to see that $v$ is $T'$-visible with a vertex $w$ in $O$. Indeed, $d_H(w,v)=3$, and consider the path from $v$ to $w$ through $x$ and the common neighbor of $x$ and $w$, both of which are not in $T$. It remains to check that $v$ is $T'$-visible to other vertices in $S$ that are at distance $4$ from $v$.  Suppose that $s$ is a vertex in $S$ that has at least one end-vertex $z$ in $O_1\cup \cdots\cup O_{\ell+1}$. Then the path $vxrzs$, where $r$ is the common neighbor of $x$ and $z$, has only its end-vertices in $T'$. We infer that $v$ and $s$ are $T'$-visible. The final possibility is that $s\in S$ is at distance $4$ from $v$ and both neighbors $g$ and $h$ of $s$ are in $O'$. Since $\deg_H(x)\le 1$, at least one of the vertices $g$ or $h$ has the common neighbor with $x$ that is not in $S$. From this we can readily find the path from $v$ to $s$ with no internal vertices from $T'$. To finish the proof that $T'$ is a mutual-visibility set, we need to prove that vertices in $T$ are $T'$-visible. For this purpose we need to consider only those pairs of vertices in $T$, which have $v$ on their shortest path. In addition, this implies that $x$ and $y$ are also on their shortest path. Moreover, this is only possible if $\deg_H(x)=1$, $v'\in S$ is a neighbor of $x$, $v''\in S$ is a neighbor of $y$, and $v'xvyv''$ is a shortest $v'v''$-path. Let $t$ be the other neighbor of $v''$, different from $y$. Since $\deg_H(x)=1$, the common neighbor $u'$ of $x$ and $t$ is not in $S$, and so $v'xu'tv''$ is a shortest $v'v''$-path whose internal vertices are not in $T'$. This yields that $T'$ is indeed a mutual-visibility set, a contradiction, by which the claim that $\delta(H)\ge 2$ is proved. Let $S'$ be the set of vertices in $S$ that are adjacent to two vertices in $O'$. Note that each edge of $H$ uniquely corresponds to a vertex in $S'$. Thus, since $\delta(H)\ge 2$, we infer $|S'|=|E(H)|\ge |V(H)|=|O'|$. Hence,
$$|T|=|O|+|S|\ge |O|+|S_0|+\cdots+ |S_\ell|+|S'|\ge |O|+|O_1|+\cdots+ |O_{\ell+1}|+|O'|=n,$$
as desired.

Finally, let $|O'|\in \{1,2\}$. In this case $O\ne\emptyset$ and $S_0\ne \emptyset$ (implying also $O_1\ne \emptyset$), since $n\ge 3$. Indeed, if $O=\emptyset$, then $|T|\le 1$, which is clearly impossible. On the other hand, if $S_0\ne \emptyset$, then $|S|\le 1$, which leads to a contradiction with maximality of the mutual-visibility set $T$. We distinguish two cases. First, suppose  $|O'|=1$, and let $O'=\{x\}$. Note that $x$ is not adjacent to any vertex in $T$. Letting $T'=T\cup \{x\}$, we get a mutual-visibility set, a contradiction. Second, let $|O'|=2$, and let $O'=\{x,y\}$. We can also assume that the common neighbor $v$ of $x$ and $y$ is in $S$, for otherwise we can again make a larger mutual-visibility set by adding $x$ to $T$, which is the same contradiction as in the case $|O'|=1$. Now, let $z$ be a neighbor of $x$, different from $v$, which is also a neighbor of a vertex in $O_{\ell+1}$. Letting $T'=T\cup\{z\}$, we derive that $T'$ is a mutual-visibility set by following similar lines as in the previous paragraph. By this final contradiction the proof is complete.
\qed

\bigskip

Combining Theorems~\ref{thm:block} and~\ref{thm:subdiv-complete} we infer the following result.

\begin{cor}
For any $k\in\mathbb{N}$ there exist graphs $G$ and $H$ such that $\ml(G)-\tml(G)=k$ and $\tml(H)-\ml(H)=k$.
\end{cor}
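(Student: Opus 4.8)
The plan is to prove the corollary as a direct bookkeeping consequence of Theorems~\ref{thm:block} and~\ref{thm:subdiv-complete}, by exhibiting for each $k$ a pair of explicit graphs, one witnessing each direction of the difference, and reading off the two invariants from those theorems. First I would fix the general strategy: produce one parametrized family realizing (in fact all) values of $\tml-\ml$, and a second family realizing the values of $\ml-\tml$.

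For the direction $\tml(H)-\ml(H)=k$, I would let $H$ be a tree, which is the simplest block graph (each block is a $K_2$). In a tree the simplicial vertices are exactly the leaves and every maximal clique is an edge, so Theorem~\ref{thm:block} gives that $\tml(H)$ equals the number of leaves of $H$ and that $\ml(H)=2$. Taking $H$ to be the star $K_{1,k+2}$, which has $k+2$ leaves, yields $\tml(H)-\ml(H)=(k+2)-2=k$; this realizes every $k\ge 0$. For the direction $\ml(G)-\tml(G)=k$, I would let $G=S(K_n)$. Theorem~\ref{thm:subdiv-complete} gives $\tml(S(K_n))=0$ and $\ml(S(K_n))=n$, so $\ml(G)-\tml(G)=n$, and setting $n=k$ realizes every $k\ge 3$.

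The step I expect to be the main obstacle is not the large-$k$ regime but covering the small values $k\in\{0,1,2\}$ for the direction $\ml-\tml$. The subdivided-complete-graph family only produces differences $k\ge 3$, because Theorem~\ref{thm:subdiv-complete} requires $n\ge 3$, and the block-graph family cannot help here at all: a short case analysis shows that in any connected block graph one has $\tml\ge\ml$ (a single block $K_n$ gives equality, while with at least two blocks the two leaf blocks already contribute enough simplicial vertices to dominate the size of a smallest maximal clique). Hence I would patch these three values with ad hoc examples: $K_2$ for $k=0$ (where $\ml=\tml=2$); $C_4=K_{2,2}$ for $k=1$, where Proposition~\ref{prop:comparison-2} gives $\ml(C_4)=3$ while a direct check gives $\tml(C_4)=2$; and for $k=2$ two disjoint $5$-cycles joined by a single edge, where Theorem~\ref{thm_cut-edge} gives $\ml=2$ via the bridge, while girth $5$ and minimum degree $2$ force $\tml=0$ by the girth/minimum-degree criterion of Tian and Klav\v{z}ar invoked in Theorem~\ref{thm:subdiv-complete}. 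Assembling the two infinite families with these three small examples then verifies the claim for all $k$.
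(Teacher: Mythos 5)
Your proposal is correct and follows the paper's own route: the corollary is presented there as an immediate consequence of Theorems~\ref{thm:block} and~\ref{thm:subdiv-complete}, using trees (stars) for the direction $\tml(H)-\ml(H)=k$ and the subdivided complete graphs $S(K_n)$ for the direction $\ml(G)-\tml(G)=k$. Your additional patching of the small values $k\in\{0,1,2\}$ in the second direction --- which the constraint $n\ge 3$ in Theorem~\ref{thm:subdiv-complete} leaves uncovered and which the paper's one-line justification passes over --- is a correct refinement: $K_2$, then $C_4$ via Proposition~\ref{prop:comparison-2} together with the direct computation $\tml(C_4)=2$, and the two $5$-cycles joined by a bridge via Theorem~\ref{thm_cut-edge} and the Tian--Klav\v{z}ar girth/minimum-degree criterion all check out, as does your side remark that block graphs always satisfy $\tml\ge\ml$ and hence cannot supply these cases.
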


\section{Concluding remarks}
\label{sec:conc}

In earlier studies of (total) mutual-visibility problems authors considered grids, Hamming graphs, and other types of graph products. As mentioned in Section~\ref{sec:related}, the mutual-visibility number of Cartesian products of two complete graphs is in close relationship with the well known Zarankiewicz problem. In Section~\ref{ss:bolobas}, we established a similarly close connection between the lower mutual-visibility number of Cartesian products of two complete graphs with a Bollob\'{a}s-Wessel theorem, which enabled us to determine the value of $\ml(K_n\cp K_m)$. The natural question is if one can determine the total mutual-visibility number of Cartesian products of two complete graphs. For the standard version of the total mutual-visibility number, the following formula holds (cf.~\cite[Proposition 4.3]{TK}):
$$\mu_t(K_n\Box K_m)=\max\{m,n\},$$ and we could prove a similar result for the lower total mutual-visibility number (which we state without a proof):

\begin{prop}
\label{prp:product-cliques}
For any $m,n\ge 3$, we have $\tml(K_m\cp K_n)=\min\{m,n\}$.
\end{prop}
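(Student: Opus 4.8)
The plan is to prove the two inequalities $\tml(K_m\cp K_n)\le\min\{m,n\}$ and $\tml(K_m\cp K_n)\ge\min\{m,n\}$ separately. Throughout, write the vertices of $K_m\cp K_n$ as pairs $(i,j)$ with $i\in[m]$, $j\in[n]$, and assume without loss of generality that $m\le n$, so that $\min\{m,n\}=m$. The crucial structural fact (proved in~\cite{CDK} and recalled in Section~\ref{ss:bolobas}) is that a set $S\subseteq V(K_m\cp K_n)$ is a mutual-visibility set if and only if $S$, viewed as a set of cells in an $m\times n$ grid, contains no $C_4$; equivalently, no two rows share two common columns, i.e.\ no $2\times 2$ all-selected submatrix. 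For total mutual-visibility the condition is stronger, and I would first record precisely which configurations are forbidden: since every pair of vertices of the whole product must be $S$-visible, a diagonal-type obstruction arises. Concretely, I expect the condition to amount to a transversal-like constraint, namely that $S$ corresponds to a partial permutation matrix (at most one selected cell per row and per column), because any two selected cells in a common row or column already block visibility of a third vertex lying on a shortest path.

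For the upper bound I would exhibit an explicit maximal total mutual-visibility set of size $m$. The natural candidate is the diagonal set $D=\{(i,i):i\in[m]\}$. First I would verify that $D$ is a total mutual-visibility set: given any two vertices of the product, one checks that a shortest path (of length at most $2$) between them can be routed avoiding the diagonal except possibly at endpoints, using the abundance of columns ($n\ge m$). Then I would verify maximality: adding any further vertex $(i,j)$ with $i\ne j$ creates, together with the diagonal entries $(i,i)$ and $(j,j)$, a configuration obstructing the visibility of some pair of product vertices. This yields $\tml(K_m\cp K_n)\le m$.

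For the lower bound, let $X$ be any maximal total mutual-visibility set; the goal is $|X|\ge m$. Here I would argue that the total mutual-visibility condition forces $X$ to meet every row (or: forces a structure dense enough that fewer than $m$ vertices cannot be maximal). The key step is to show that if $X$ misses some row $i$ entirely and $|X|<m$, then one can add a suitable vertex in row $i$ without destroying the total mutual-visibility property, contradicting maximality. This uses a counting/pigeonhole argument: with fewer than $m$ chosen cells spread over the columns, some column in row $i$ is "free" in the sense that placing a vertex there does not complete any forbidden configuration with existing cells, because there is enough room (again exploiting $n\ge m$) to reroute the shortest paths through unused rows or columns.

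The main obstacle I anticipate is the exact identification and handling of the forbidden configurations for the \emph{total} (as opposed to ordinary) mutual-visibility condition: verifying that a candidate vertex can always be added amounts to checking visibility for \emph{all} pairs of product vertices, not merely pairs within $X$, and shortest paths in $K_m\cp K_n$ have length at most $2$, so the analysis hinges on controlling which single intermediate vertices lie in $X$. Making the pigeonhole step rigorous—guaranteeing a safe insertion whenever $|X|<m$—is where the careful casework lives, and it is precisely why the hypothesis $m,n\ge 3$ is needed (small cases behave degenerately). Once the partial-permutation-matrix characterization is established, both bounds follow fairly mechanically, so I would invest most of the effort in that characterization.
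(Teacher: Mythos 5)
There is a genuine gap, and it lies exactly where you anticipated trouble: the identification of the forbidden configurations for \emph{total} mutual-visibility in $K_m\cp K_n$. Your guessed characterization (that $S$ must be a partial permutation matrix, with at most one cell per row and per column) is precisely backwards. Two vertices of $K_m\cp K_n$ lying in a common row or column are adjacent, so they never obstruct anything; the genuinely forbidden configuration is a pair of cells $(a,b),(c,d)\in S$ with $a\ne c$ \emph{and} $b\ne d$. Indeed, for such a pair the vertices $(a,d)$ and $(c,b)$ are at distance $2$ and have exactly two shortest paths between them, one through $(a,b)$ and one through $(c,d)$, both in $S$; hence $(a,d)$ and $(c,b)$ are not $S$-visible. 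In particular your candidate set $D=\{(i,i):i\in[m]\}$ is \emph{not} a total mutual-visibility set already for $m,n\ge 2$ (take $(1,2)$ and $(2,1)$), so the upper-bound construction collapses; $D$ is a mutual-visibility set, but the total condition is what kills it. The correct structural statement is the opposite of a transversal constraint: $S$ is a total mutual-visibility set if and only if every two of its elements agree in some coordinate, which for $|S|\ge 3$ forces $S$ to lie entirely inside a single $K_m$-fiber or a single $K_n$-fiber.

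With that corrected characterization the proof is short and does not need any pigeonhole or rerouting argument: a maximal total mutual-visibility set must be an \emph{entire} fiber (a proper subset of a fiber extends to the whole fiber, and one checks a full fiber is a total mutual-visibility set and is maximal), so the maximal sets have cardinalities exactly $m$ and $n$, giving $\tml(K_m\cp K_n)=\min\{m,n\}$, which is the route the paper takes. Your lower-bound plan (``$X$ must meet every row'') is likewise aimed at the wrong structure: a maximal set meets only one row or only one column. I would encourage you to rework the argument starting from the displayed forbidden pair above; everything else then follows in a few lines.
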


In Section~\ref{ss:neighborhood}, we established, as one of the applications of the Neighborhood Lemma, that $\ml(P_m\cp P_n)=3$. Again, we can ask about the lower total mutual-visibility number in grids.
The authors of~\cite{CDDH} observed, based on the results of Tian and Klav\v{z}ar~\cite{TK}, that $\mu_t(P_n\cp P_m)=4$. In fact, they noticed that the total mutual-visibility number of the Cartesian product of $k$ paths, each on at least three vertices, equals $2^k$.
Now, let $S$ be a total mutual-visibility set of $P_{n_1}\cp\cdots\cp P_{n_k}$, where $k\ge 2$ and all $n_i\ge 3$.  If $v$ is a vertex of degree at least $3$, then $v$ is the center of a convex $P_3$, hence $v\notin S$. Hence, only vertices of degree $2$ can lie in $S$. In addition, if not all $2^k$ vertices of degree $2$ are in $S$, then $S$ is not a maximal total mutual-visibility set of $P_{n_1}\cp\cdots\cp P_{n_k}$. We derive the following result.

\begin{prop}
\label{prp:total-grids}
If $k\ge 2$ and $n_i\ge 3$ for all $i\in [k]$, then $\tml(P_{n_1}\cp\cdots\cp P_{n_k})=2^k$.
\end{prop}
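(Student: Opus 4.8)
The plan is to prove the formula $\tml(P_{n_1}\cp\cdots\cp P_{n_k})=2^k$ by a clean two-part argument: first identifying which vertices can possibly belong to a total mutual-visibility set, and then showing that the unique maximal total mutual-visibility set consists of exactly the $2^k$ ``corner'' vertices (those of degree $2$ in the grid). Throughout I write $G=P_{n_1}\cp\cdots\cp P_{n_k}$, and for a vertex $v=(v_1,\ldots,v_k)$ I note that $\deg_G(v)=\sum_{i=1}^k \deg_{P_{n_i}}(v_i)$, so that $v$ has degree exactly $2$ in $G$ if and only if each coordinate $v_i$ is an endpoint of $P_{n_i}$ (that is, $v_i\in\{1,n_i\}$), and otherwise $\deg_G(v)\ge 3$.

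First I would establish the upper-bound direction, namely that no vertex of degree at least $3$ can lie in a total mutual-visibility set. As already sketched in the text preceding the statement, the key observation is that any vertex $v$ with $\deg_G(v)\ge 3$ is the central vertex of a convex $P_3$ in $G$: since $v$ has a coordinate $v_i$ that is an internal vertex of $P_{n_i}$, the two grid neighbors $u,w$ of $v$ differing from $v$ only in that $i$-th coordinate (by $\pm 1$) satisfy $d_G(u,w)=2$ with $v$ the \emph{only} common neighbor, so the path $uvw$ is convex. Hence $u$ and $w$ fail to be $\{v\}$-visible, and $v$ cannot belong to any total mutual-visibility set. This shows every total mutual-visibility set $S$ satisfies $S\subseteq D$, where $D$ is the set of $2^k$ degree-$2$ vertices.

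Next I would verify that $D$ itself is a total mutual-visibility set, which combined with the previous paragraph forces $D$ to be the unique maximal total mutual-visibility set and gives $\tml(G)=|D|=2^k$. For this I must check that for every pair $a,b\in V(G)$ there is a shortest $a,b$-path avoiding all internal vertices of $D$; equivalently, since $D$ consists of corners, that no corner of $D$ is forced to be an internal vertex on every shortest path between two other vertices. The convenient fact here is that in the Cartesian product of paths a shortest $a,b$-path corresponds to a coordinatewise monotone route, and one can always realize such a route avoiding any prescribed corner unless that corner is itself forced by monotonicity to lie strictly between $a$ and $b$ in every coordinate that moves. A corner $c\in D$ has every coordinate equal to an endpoint $1$ or $n_i$; if $c$ were internal to \emph{every} shortest $a,b$-path, then in each coordinate where $a$ and $b$ differ, $c_i$ would have to lie strictly between $a_i$ and $b_i$, which is impossible since $c_i\in\{1,n_i\}$ is extremal. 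Thus every pair is $D$-visible.

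The main obstacle I anticipate is the bookkeeping in the $D$-visibility check: one must argue carefully that a degree-$2$ corner never obstructs visibility, handling the coordinates in which $a$ and $b$ agree separately from those in which they differ, and confirming that a monotone shortest path can route around any extremal corner. This is essentially the same phenomenon exploited in the known formula $\mu_t(P_{n_1}\cp\cdots\cp P_{n_k})=2^k$ cited from~\cite{TK} and the authors' remark in~\cite{CDDH}, so the visibility verification can be kept brief; the genuinely new content is only the maximality/uniqueness conclusion, which follows immediately once both inclusions $S\subseteq D$ and ``$D$ is a total mutual-visibility set'' are in hand.
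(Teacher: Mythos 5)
Your proposal is correct and follows essentially the same route as the paper: exclude every vertex of degree at least $3$ because it is the central vertex of a convex $P_3$, then conclude that the set of $2^k$ degree-$2$ vertices is the unique maximal total mutual-visibility set, relying on the known fact from~\cite{CDDH} and~\cite{TK} that this corner set is indeed a total mutual-visibility set. The only caveat is that your direct visibility sketch only rules out a \emph{single} corner lying on every shortest $a,b$-path, whereas one must exhibit one shortest path avoiding \emph{all} corners simultaneously; since you (like the paper) ultimately defer that verification to the cited $\mu_t=2^k$ result, this does not affect correctness.
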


As already mentioned, total mutual-visibility appeared naturally in the study of mutual-visibility in strong products of graphs~\cite{CDKY}, while it was further and separately studied for the Cartesian and other products of graphs in~\cite{CDK,Kuziak,TK}.
From this perspective, it would be interesting to consider lower (total) mutual-visibility in Cartesian product of graphs and other graph products. Whilst some general results concerning the (total) mutual-visibility numbers in Cartesian products of graphs would be desirable, one can also restrict to some basic classes of graph products that were considered in earlier papers on mutual-visibility. We explicitly state some of these problems as follows.

\begin{prob}
Determine the lower (total) mutual-visibility numbers of Cartesian cylinders (graphs $C_m\cp  P_n$), tori (graphs $C_m\cp C_n$), and hypercubes $Q_n$.
\end{prob}

In Section~\ref{sec:complexity}, we proved that the decision version of determining the lower total mutual-visibility number is NP-complete. However, we could not establish this for the lower mutual-visibility number, which we now formulate as the following problem.

\begin{prob}
What is the computational complexity of the decision version of determining $\ml(G)$?
 \end{prob}

In Section~\ref{sec:bounds}, we established general bounds on the two newly introduced invariants. For the lower bounds we characterized extremal graphs, which we even extended to graphs $G$ with $\ml(G)=2$; see Theorem~\ref{thm_cut-edge}. In this sense, it would be interesting to characterize the graphs $G$ with $\tml(G)=1$.

Concerning the upper bounds in Proposition~\ref{prp:basic}, we have not explicitly mentioned that they can be attained. Yet, from Proposition~\ref{prp:basic} and~Theorem~\ref{thm_cut-edge} we immediately infer that $\ml(G)=\mu(G)$ for every connected graph $G$ with a cut-edge. Similarly, Proposition~\ref{prp:total-grids} and~\cite[Corollary 4.2]{CDDH} imply that $\tml(P_{n_1}\cp\cdots\cp P_{n_k})=\mu_t(P_{n_1}\cp\cdots\cp P_{n_k})$. In this vein, we conclude the paper with the following problem.

\begin{prob}
Characterize the graphs $G$ with $\ml(G)=\mu(G)$, and $\tml(G)=\mu_t(G)$, respectively.
\end{prob}

Finally, from Propositions \ref{prop:comparison-1} and \ref{prop:comparison-2}, one may consider dealing with a realization result that involves the two parameters $\gp^-$ and $\mu^-$, which is the following problem.

\begin{prob}
Given any two integers $r,t\ge 2$, construct a graph $G$ such that $\gp^-(G)=r$ and $\mu^-(G)=t$.
\end{prob}


\section*{Acknowledgement}

We are thankful to two reviewers for numerous remarks that helped to improve the paper.
We want to thank Bal\'{a}zs Patk\'{o}s for pointing to us the theorem of Bollob\'{a}s and Wessel.
B.B. was supported by the Slovenian Research Agency (ARRS) under the grants P1-0297, J1-2452, J1-3002, and J1-4008. I.G.Y. has been partially supported by the Spanish Ministry of Science and Innovation through the grant PID2019-105824GB-I00.

\end{document}